\newtheorem{theorem}{Theorem}[section]
\newtheorem{lemma}[theorem]{Lemma}
\newtheorem{proposition}[theorem]{Proposition}
\theoremstyle{remark}
\DeclareMathOperator{\tr}{tr}
\begin{document}

\title{On Learning-Curve Monotonicity for Maximum Likelihood Estimators}
\author{Mark Sellke \and Steven Yin}
\date{}
\maketitle

\begin{abstract}
The property of learning-curve monotonicity, highlighted in the recent papers \cite{VieringLoog2019,loog2019minimizers,viering2022shape}, describes algorithms which only improve in average performance given more data, for any underlying data distribution within a given family.
We establish the first nontrivial monotonicity guarantees for the maximum likelihood estimator in a variety of well-specified parametric settings.
For sequential prediction with log loss, we show monotonicity (in fact complete monotonicity) of the forward KL divergence for Gaussian vectors with unknown covariance and either known or unknown mean, as well as for Gamma variables with unknown scale parameter.
The Gaussian setting was explicitly highlighted as open in the aforementioned works, even in dimension $1$.
Finally we observe that for reverse KL divergence, a folklore majorization trick from \cite{marshall1965inequality} yields monotonicity for very general exponential families.

All results in this paper were derived by variants of GPT-5.2 Pro.
Humans did not provide any proof strategies or intermediate arguments, but only prompted the model to continue developing additional results, and verified and transcribed its proofs.

\end{abstract}

\setcounter{tocdepth}{1}
\tableofcontents

\section{Introduction}

In statistics and machine learning, the classical notion of a \emph{learning curve} describes how the performance of an algorithm evolves as it is exposed to more data. 
As highlighted in \cite{VieringLoog2019,loog2019minimizers,viering2022shape}, an appealing and intuitive property is \emph{monotonicity} in the data: the average performance of a learning algorithm should only improve as more data becomes available.
Although it is intuitively natural to expect such behavior, \cite{VieringLoog2019} constructed simple examples of non-monotone learning curves for mis-specified models, where the data generating distribution falls outside of the model class used for prediction (see also \cite{loog2023also}).

Does monotonicity hold for any commonly used estimators?
We focus in particular on the maximum likelihood estimator under well-specification, addressing a question of \cite{viering2022shape}:

\begin{center}
\parbox{0.85\textwidth}{
\emph{
...we wonder whether maximum likelihood estimators for well-specified
models behave monotonically. Likelihood estimation, being
a century-old, classical technique, has been heavily
studied, both theoretically and empirically. In much of the
theory developed, the assumption that one is dealing with
a correctly specified model is common, but we are not
aware of any results that demonstrate that better models are
obtained with more data.}
}
\end{center}

A particularly simple and fundamental instantiation of this general question was raised in \cite{VieringLoog2019}: fitting a single Gaussian.
For IID samples from a Gaussian distribution with known unit variance but unknown mean, the $n$-sample MLE (i.e. the sample mean) differs from the true mean by a centered Gaussian with variance $\frac{1}{n}$; thus monotonicity holds in any reasonable sense.
However monotonicity is much less clear for a Gaussian with known mean and \emph{unknown} variance.
Here we let $z_1,z_2,\dots$ be IID from a one-dimensional centered normal distribution $\mathcal N(0,v_\ast)$ with unknown variance $v_\ast>0$.
Then the maximum likelihood estimator for $v_*$ is $\hat v_n = \frac1n\sum_{i=1}^n z_i^2$.
We consider two risk measures for this approximation: the expected forward and reverse Kullback--Leibler (KL) divergence, given respectively by
\[
\mathcal E_n = \mathbb E\bigl[D_{\mathrm{KL}}(\mathcal N(0,v_\ast),\mathcal N(0,\hat v_n))\bigr],
\qquad
\widetilde{\mathcal E}_n = \mathbb E\bigl[D_{\mathrm{KL}}(\mathcal N(0,\hat v_n),\mathcal N(0,v_\ast))\bigr].
\]
In Example II therein, \cite{VieringLoog2019} demonstrated that a mis-specified analog of $\mathcal E_n$ (with $z_i$ IID from a non-Gaussian distribution) could be non-monotone, and went on to ask:
\begin{center}
\parbox{0.85\textwidth}{
\emph{
...this raises the issue
to what extent well-specified statistical models can actually be proven to behave monotonically. For
instance, is Example II monotone if the problem is well-specified?
}
}
\end{center}
\cite{loog2019minimizers} similarly remarked that monotonicity is unclear for Gaussians when both the mean and variance are unknown.
These works focused primarily on the forward KL risk $\mathcal E_n$, motivated by sequential prediction.

We establish monotonicity of the MLE in several well-specified settings. For forward KL divergence, we prove monotonicity for Gaussian vectors with unknown covariance and either known or unknown mean, as well as for Gamma distributions with unknown scale parameter but known shape (e.g.\ exponential random variables). 
In fact we show a much stronger property known as complete monotonicity, which implies that the marginal value of the $n$-th datapoint is not only positive but also strictly decreases with $n$.
For reverse KL divergence, we observe that monotonicity holds in the extremely general setting of exponential families, a fact related to known results from e.g.\ \cite{marshall1965inequality} and \cite[Chapter 2, Problem 34(b)]{cover1999elements}.

\subsection*{Statement on AI Assistance}

The proof of Theorem~\ref{thm:main}, which directly addresses the question of \cite{VieringLoog2019}, was originally due to an unreleased prototype of a longer-thinking-time version of GPT-5.2 Pro.
The model was asked to solve the $1$-dimensional problem raised in \cite{VieringLoog2019}, and came back to us with a correct proof in the more general setting of multivariate centered Gaussians.
With this proof in context, the now-public version of GPT-5.2 Pro was then able to:
\begin{enumerate}
    \item
    Transcribe the proof of Theorem~\ref{thm:main} into Latex form.
    \item 
    Locate references for standard facts on Gamma variables and functions (the original AI output proved identities such as Equation \eqref{eq:trigamma_series} and Lemma~\ref{lem:ElogGamma} from scratch).
    \item
    Extend the Gaussian results to unknown mean and reverse KL divergence, when we asked if such extensions were possible (see Theorem~\ref{thm:full_main} and Propositions~\ref{prop:reverse_known_mean} and \ref{prop:reverse_unknown_mean}).
    \item 
    Extend the reverse KL results to Poisson and Binomial distributions using a convexity argument, when we asked if this was possible (GPT-5.2 Pro also pointed out that the expected forward KL divergences are always infinite in both cases).
    \item 
    Extend this convexity proof to general exponential families, when asked whether it could be pushed further (see Proposition~\ref{prop:reverse_exp_family_monotone}).
    \item 
    Locate the closely related \cite[Chapter 2, Problem 34(b)]{cover1999elements}, when asked whether Proposition~\ref{prop:reverse_exp_family_monotone} was already known.
\end{enumerate}

Having obtained these results, we wanted to test how the public GPT-5.2 Pro model would fare without being given the proof of Theorem~\ref{thm:main} in the first step.
Impressively, in its first attempt at the same initial query (itself an AI-generated restatement of the open problem from \cite{VieringLoog2019}), the public model also successfully proved Theorem~\ref{thm:main}.
We then reproduced the other core results of this paper (Theorems~\ref{thm:full_main} and \ref{thm:gamma_forward_KL}, and Proposition~\ref{prop:reverse_exp_family_monotone}) in our first attempt using only the following three follow-up prompts (without any retries or prompt rewriting):
\begin{itemize}
    \item 
    \emph{how about in higher dimensions, when the entire covariance matrix is unknown?}
    \item 
    \emph{what about for exponential/gamma/poisson/binomial? does anything still work there?}
    \item 
    \emph{what about reverse KL? do things work there?}
\end{itemize}

On the other hand, our unreleased prototype also proved a much stronger \emph{complete monotonicity} property of the forward KL divergence via an explicit Laplace transform representation.
This extension was provided first (without our asking for it) in the originally requested setting of centered Gaussian variables in dimension $1$.
With this proof in context, GPT-5.2 Pro subsequently generalized it further to higher dimensional Gaussians, unknown means, and Gamma variables; see the discussion around Equation~\eqref{eq:laplace-formula} and Section~\ref{sec:laplace_monotonicity}.

In summary, the human contributions to this paper (aside from the development of GPT-5.2 Pro and its internal variant) were as follows:
\begin{enumerate}[label=(\alph*)]
    \item 
    Prompting the model to continue generalizing its results. 
    We did not work out any of these extensions ourselves before prompting the model, but just asked simple and natural followup questions.
    \item
    Checking the proofs written by GPT-5.2 Pro.
    To our knowledge, the only mathematical mistake that needed fixing occurred in \eqref{eq:trigamma_upper_simple}. GPT-5.2 Pro's initial proof flipped an inequality sign when bounding a sum by an integral, yielding an overly optimistic~bound.
    However this mistake did not affect any downstream arguments.
    \item 
    Editing and rearranging the writing for clarity and style, and writing the introduction. 
\end{enumerate}

Of course, all proofs, citations, and other claims were verified by the human authors, and we take full responsibility for their correctness.

\subsection{Main Results}

Let $Z_1,Z_2,\dots$ be IID from a distribution $P_\ast=P_{\theta_*}$ which belongs to a parametric family
$\{P_\theta:\theta\in\Theta\}$ with densities $\{p_\theta\}_{\theta\in\Theta}$ (with respect to a fixed base measure).
Given data $Z_{1:n}:=(Z_1,\dots,Z_n)$, the MLE is any maximizer
\[
\widehat\theta_n \in \arg\max_{\theta\in\Theta}\ \sum_{i=1}^n \log p_\theta(Z_i).
\]
We consider only settings where the MLE is almost surely unique, and study the expected accuracy of the fitted model $P_{\widehat\theta_n}$ as a function of $n$.

\medskip
\noindent\textbf{Learning-curve risks.}
To measure the estimation error between $\widehat\theta_n$ and $\theta_*$, we consider both directions of the Kullback--Leibler divergence
\[
D_{\mathrm{KL}}(P\|Q):=\mathbb E_{X\sim P}\!\left[\log\frac{dP}{dQ}(X)\right].
\]
Namely we define the \emph{forward} and \emph{reverse} KL learning curves of the MLE by
\[
\mathcal E_n \;:=\; \mathbb E\!\left[D_{\mathrm{KL}}(P_\ast\,\|\,P_{\widehat\theta_n})\right],
\qquad
\widetilde{\mathcal E}_n \;:=\; \mathbb E\!\left[D_{\mathrm{KL}}(P_{\widehat\theta_n}\,\|\,P_\ast)\right],
\]
where the expectation is over the training sample $Z_{1:n}$.
We say the learning curve is monotone for forward KL divergence if $\mathcal E_{n+1}\le \mathcal E_n$ for
all $n$ where the former is finite, and strictly so if the inequality is always strict; similarly for $\widetilde{\mathcal E}_n$.

We note that in any parametric family with densities $p_\theta$, the forward KL divergence is the excess expected log-loss of the prediction $P_{\widehat\theta_n}$ applied to IID data from $P_*$, relative to the true model, up to an
additive constant independent of the estimator:
\[
D_{\mathrm{KL}}(P_\ast\|P_{\widehat\theta_n})
=
\mathbb E_{Z\sim P_\ast}[-\log p_{\widehat\theta_n}(Z)] \;-\; \mathbb E_{Z\sim P_\ast}[-\log p_{\theta_\ast}(Z)].
\]
This sequential prediction perspective is the one taken in \cite{VieringLoog2019}.
We will however work only with $D_{\mathrm{KL}}$, which has the advantage of being invariant under reparametrization.

\medskip
\noindent\textbf{Gaussian models.}
Fix a dimension $d\ge 1$. In the centered multivariate Gaussian model, we observe $X_1\dots,X_n\stackrel{IID}{\sim}\mathcal N(0,\Sigma_\ast)$ with
unknown strictly positive definite covariance $\Sigma_\ast\in \mathbb S^d_{++}$.
Given these observations, the MLE for the covariance $\Sigma_\ast$ is given by
\[
\widehat\Sigma_n=\frac1n\sum_{i=1}^n X_iX_i^\top.
\]
The corresponding forward-KL learning curve is
\[
\mathcal E_{n,d}(\Sigma_\ast)
:=\mathbb E\!\left[D_{\mathrm{KL}}\!\big(\mathcal N(0,\Sigma_\ast)\,\|\,\mathcal N(0,\widehat\Sigma_n)\big)\right].
\]
In Section~\ref{sec:forward_gaussian} we show the following monotonicity property of $\mathcal E_{n,d}$.

\begin{theorem}
\label{thm:main}
Fix $d\ge 1$ and $\Sigma_* \in \mathbb S^d_{++}$.
If $n>d+1$, then the forward KL risk is independent of $\Sigma_*$ and given by
\begin{equation}
\label{eq:main_closed_form}
\mathcal E_{n,d}(\Sigma_\ast)
= \frac12\bigl(f_d(n)-d\bigr),
\end{equation}
where $\psi$ is the digamma function (see \eqref{eq:polygamma-functions}) and
\[
f_d(x)
:= \sum_{j=1}^d \psi\Bigl(\frac{x-j+1}{2}\Bigr)
- d\log(x/2)
+ \frac{xd}{x-d-1},
\qquad x>d+1.
\]
Moreover, $\mathcal E_{n,d}$ is data-monotone in the sense that
\[
\mathcal E_{n+1,d}(\Sigma_\ast) \;<\; \mathcal E_{n,d}(\Sigma_\ast) \;<\;\infty.
\]
If $n\le d+1$, then $\mathcal E_{n,d}(\Sigma_\ast)=+\infty$.
\end{theorem}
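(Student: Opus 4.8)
The plan is to reduce the whole problem to an explicit Wishart computation, obtain the closed form \eqref{eq:main_closed_form}, and then prove monotonicity by differentiating $f_d$ in a continuous variable.

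\medskip
\textbf{Step 1: reduction to $\Sigma_\ast=I_d$.} For centered Gaussians one has the standard identity
\[
D_{\mathrm{KL}}\big(\mathcal N(0,\Sigma_\ast)\,\|\,\mathcal N(0,\widehat\Sigma_n)\big)=\tfrac12\big(\tr(\widehat\Sigma_n^{-1}\Sigma_\ast)-d-\log\det(\widehat\Sigma_n^{-1}\Sigma_\ast)\big)
\]
when $\widehat\Sigma_n$ is invertible, and $+\infty$ otherwise. Writing $X_i=\Sigma_\ast^{1/2}Y_i$ with $Y_i\stackrel{IID}{\sim}\mathcal N(0,I_d)$ gives $\widehat\Sigma_n=\Sigma_\ast^{1/2}\widehat S_n\Sigma_\ast^{1/2}$ where $\widehat S_n:=\tfrac1n\sum_{i=1}^nY_iY_i^\top$, and a direct manipulation shows $\tr(\widehat\Sigma_n^{-1}\Sigma_\ast)=\tr(\widehat S_n^{-1})$ and $\det(\widehat\Sigma_n^{-1}\Sigma_\ast)=\det(\widehat S_n^{-1})$. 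Hence the KL divergence is a fixed function of the Wishart matrix $W:=n\widehat S_n=\sum_{i=1}^nY_iY_i^\top\sim W_d(I_d,n)$; in particular $\mathcal E_{n,d}(\Sigma_\ast)$ does not depend on $\Sigma_\ast$, and it suffices to treat $\Sigma_\ast=I_d$.

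\medskip
\textbf{Step 2: the closed form and the boundary cases.} Since $\widehat S_n^{-1}=nW^{-1}$, I will plug in two classical Wishart moment formulas. First, the inverse-Wishart mean $\mathbb E[W^{-1}]=\tfrac{1}{n-d-1}I_d$, valid for $n>d+1$, gives $\mathbb E[\tr(\widehat S_n^{-1})]=\tfrac{nd}{n-d-1}$. Second, the log-determinant identity $\mathbb E[\log\det W]=d\log2+\sum_{j=1}^d\psi\big(\tfrac{n+1-j}{2}\big)$ (via the Bartlett decomposition $\det W=\prod_{j=1}^d c_j$ with independent $c_j\sim\chi^2_{n+1-j}$ and $\mathbb E[\log\chi^2_k]=\log2+\psi(k/2)$) gives $\mathbb E[\log\det\widehat S_n^{-1}]=d\log(n/2)-\sum_{j=1}^d\psi\big(\tfrac{n+1-j}{2}\big)$. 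Substituting into Step~1 and simplifying yields exactly $\mathcal E_{n,d}=\tfrac12(f_d(n)-d)<\infty$ for $n>d+1$. For $n\le d+1$ I argue infinitude: if $n<d$, then $W$ (hence $\widehat\Sigma_n$) is a.s.\ rank-deficient, so $\mathcal N(0,\Sigma_\ast)$ is not absolutely continuous w.r.t.\ $\mathcal N(0,\widehat\Sigma_n)$ and the divergence is a.s.\ $+\infty$; if $n\in\{d,d+1\}$, then $\widehat\Sigma_n$ is a.s.\ invertible and $\mathbb E|\log\det\widehat S_n^{-1}|<\infty$, but the Bartlett decomposition gives $\tr(\widehat S_n^{-1})\ge n/c_d$ with $c_d\sim\chi^2_{n+1-d}$ and $n+1-d\in\{1,2\}$, so $\mathbb E[\tr(\widehat S_n^{-1})]=+\infty$ because $\mathbb E[1/\chi^2_1]=\mathbb E[1/\chi^2_2]=+\infty$; nonnegativity of $D_{\mathrm{KL}}$ then forces $\mathcal E_{n,d}=+\infty$.

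\medskip
\textbf{Step 3: monotonicity.} For integers $n\ge d+2$ it suffices to show $x\mapsto f_d(x)$ is strictly decreasing on $(d+1,\infty)$; combined with Step~2 (where $\mathcal E_{n,d}=+\infty$ for $n\le d+1$ strictly dominates the finite values) this gives $\mathcal E_{n+1,d}(\Sigma_\ast)<\mathcal E_{n,d}(\Sigma_\ast)<\infty$ in the asserted range. Differentiating,
\[
f_d'(x)=\frac12\sum_{j=1}^d\psi'\!\Big(\tfrac{x+1-j}{2}\Big)-\frac{d}{x}-\frac{d(d+1)}{(x-d-1)^2},
\]
so the point is to bound the trigamma sum. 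I will use the sharp bound $\psi'(y)=\sum_{k\ge0}(y+k)^{-2}<\tfrac1y+\tfrac1{y^2}$ (integral comparison of the tail), which gives $\tfrac12\psi'\big(\tfrac{x+1-j}{2}\big)<\tfrac{1}{x+1-j}+\tfrac{2}{(x+1-j)^2}$. Summing over $j$, writing $\sum_j\tfrac{1}{x+1-j}-\tfrac dx=\sum_{j=2}^d\tfrac{j-1}{x(x+1-j)}\le\tfrac{d(d-1)}{2x(x-d+1)}$, bounding $2\sum_j(x+1-j)^{-2}\le\tfrac{2d}{(x-d+1)^2}$, and using the elementary fact $x(x-d+1)>(x-d-1)^2$ for $x>d+1$ (equivalently $(d+3)x>(d+1)^2$) together with $(x-d+1)^2>(x-d-1)^2$, all the leftover terms fit inside $\tfrac{d(d+1)}{(x-d-1)^2}$ (using $\tfrac{d(d-1)}{2}+2d=\tfrac{d(d+3)}{2}\le d(d+1)$), whence $f_d'(x)<0$.

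\medskip
\textbf{Main obstacle.} Steps~1 and 2 are bookkeeping with well-known Wishart facts. The real content is Step~3: the precise coefficients of $f_d$ --- the $+\tfrac{xd}{x-d-1}$ coming from the inverse-Wishart mean and the $-d\log(x/2)$ coming from the log-determinant --- are exactly what make the trigamma sum dominated, so the estimate only closes if one uses the sharp inequality $\psi'(y)<\tfrac1y+\tfrac1{y^2}$ and compares term-by-term rather than crudely. (The Laplace-transform representation behind the stronger complete monotonicity, discussed later, gives an alternative, more structural route to the same conclusion.)
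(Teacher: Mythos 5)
Your proposal is correct and follows essentially the same route as the paper: whitening to reduce to $\Sigma_\ast=I_d$, the Bartlett decomposition for $\mathbb E[\log\det W]$, the inverse-Wishart mean (via the Schur-complement/$\chi^2_{n-d+1}$ fact) for the trace term and the infinitude at $n\in\{d,d+1\}$, and a derivative bound on $f_d$ via the trigamma estimate $\psi_1(y)<\tfrac1y+\tfrac1{y^2}$, which is exactly the paper's bound $\psi_1(t)<\tfrac{t+1}{t^2}$. The only differences are cosmetic: the paper weakens the trigamma bound further to $\tfrac{1}{t-1}$ so the final algebra collapses to $\tfrac{-d(d+1)^2}{x(x-d-1)^2}$ in one line, whereas you keep the split form and close the estimate with slightly more bookkeeping, and you spell out the $n\le d+1$ boundary cases a bit more explicitly.
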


We also consider a full Gaussian model, in which we observe $X_i\sim\mathcal N(\mu_\ast,\Sigma_\ast)$ with
$(\mu_\ast,\Sigma_\ast)\in\mathbb R^d\times\mathbb S^d_{++}$ with both $\mu_\ast$ and $\Sigma_\ast$ unknown.
The MLE is then given by:
\[
\widehat\mu_n=\bar X:=\frac1n\sum_{i=1}^n X_i,
\qquad
\widehat\Sigma_n=\frac1n\sum_{i=1}^n (X_i-\bar X)(X_i-\bar X)^\top.
\]
We write $\mathcal E^{\mathrm{full}}_{n,d}(\mu_\ast,\Sigma_\ast)$ for the corresponding forward-KL learning curve of
$(\widehat\mu_n,\widehat\Sigma_n)$ (see \eqref{eq:full_risk_whitened}).
Here the finiteness threshold shifts to $n>d+2$.
The next result is also shown in Section~\ref{sec:forward_gaussian}.

\begin{theorem}
\label{thm:full_main}
Fix $d\ge 1$, $\mu_\ast\in\mathbb R^d$, and $\Sigma_* \in \mathbb S^d_{++}$.
If $n>d+2$, then the forward KL risk is independent of $(\mu_*,\Sigma_*)$ and given by
\begin{equation}
\label{eq:full_main_closed_form}
\mathcal E^{\mathrm{full}}_{n,d}(\mu_\ast,\Sigma_\ast)
=\frac12\bigl(g_d(n)-d\bigr),
\end{equation}
where $\psi$ is the digamma function and
\[
g_d(x)
:= \sum_{j=1}^d \psi\Bigl(\frac{x-j}{2}\Bigr)
- d\log(x/2)
+ \frac{(x+1)d}{x-d-2},
\qquad x>d+2.
\]
Moreover, $\mathcal E^{\mathrm{full}}_{n,d}$ is data-monotone in the sense that
\[
\mathcal E^{\mathrm{full}}_{n+1,d}(\mu_\ast,\Sigma_\ast)
< \mathcal E^{\mathrm{full}}_{n,d}(\mu_\ast,\Sigma_\ast)
< \infty.
\]
If $n\le d+2$, then $\mathcal E^{\mathrm{full}}_{n,d}(\mu_\ast,\Sigma_\ast)=+\infty$.
\end{theorem}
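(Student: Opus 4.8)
The plan is to follow the two-step template of Theorem~\ref{thm:main}: reduce $\mathcal E^{\mathrm{full}}_{n,d}$ to a closed form via Wishart moments, and then show that closed form decreases in $n$ by an analytic argument. \textbf{Reduction.} By the affine equivariance of the Gaussian MLE and the reparametrization-invariance of $D_{\mathrm{KL}}$, the whitening map $x\mapsto\Sigma_\ast^{-1/2}(x-\mu_\ast)$ reduces to $\mu_\ast=0$, $\Sigma_\ast=I_d$, which also explains the independence of the risk from $(\mu_\ast,\Sigma_\ast)$. In this normalization the classical Gaussian sampling decomposition gives $\bar X\sim\mathcal N(0,\tfrac1n I_d)$ independent of $S:=n\widehat\Sigma_n\sim W_d(n-1,I_d)$. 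Expanding the Gaussian--Gaussian divergence,
\[
D_{\mathrm{KL}}\bigl(\mathcal N(0,I_d)\,\big\|\,\mathcal N(\bar X,\widehat\Sigma_n)\bigr)
=\tfrac12\Bigl(\log\det\widehat\Sigma_n-d+\tr\widehat\Sigma_n^{-1}+\bar X^\top\widehat\Sigma_n^{-1}\bar X\Bigr),
\]
and, by independence, $\mathbb E[\bar X^\top\widehat\Sigma_n^{-1}\bar X]=\tfrac1n\,\mathbb E[\tr\widehat\Sigma_n^{-1}]$, so only the two moments $\mathbb E[\tr\widehat\Sigma_n^{-1}]$ and $\mathbb E[\log\det\widehat\Sigma_n]$ remain to be computed.

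\textbf{Moment evaluation and the threshold.} Writing $\widehat\Sigma_n=\tfrac1n S$ with $S\sim W_d(n-1,I_d)$, the inverse-Wishart mean $\mathbb E[S^{-1}]=(n-d-2)^{-1}I_d$, valid exactly when $n-1>d+1$, gives $\mathbb E[\tr\widehat\Sigma_n^{-1}]=\tfrac{nd}{n-d-2}$ and hence $\bigl(1+\tfrac1n\bigr)\mathbb E[\tr\widehat\Sigma_n^{-1}]=\tfrac{(n+1)d}{n-d-2}$; the Bartlett decomposition $\det S\stackrel{d}{=}\prod_{j=1}^d\chi^2_{n-j}$ with independent factors, together with $\mathbb E[\log\chi^2_k]=\psi(k/2)+\log2$, gives $\mathbb E[\log\det\widehat\Sigma_n]=\sum_{j=1}^d\psi\bigl(\tfrac{n-j}{2}\bigr)-d\log(n/2)$. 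Substituting into the expectation of the displayed identity yields exactly \eqref{eq:full_main_closed_form}. For $n\le d+2$, since $\tr\widehat\Sigma_n^{-1}$ and the cross term are nonnegative it suffices that their expectations blow up while $\mathbb E[\log\det\widehat\Sigma_n]$ stays bounded below: if $n\le d$ then $\widehat\Sigma_n$ is a.s.\ singular and the divergence is identically $+\infty$, while if $n\in\{d+1,d+2\}$ then $\widehat\Sigma_n$ is a.s.\ invertible with $\mathbb E[\log\det\widehat\Sigma_n]$ finite but $\mathbb E[\tr\widehat\Sigma_n^{-1}]=+\infty$, so $\mathcal E^{\mathrm{full}}_{n,d}=+\infty$.

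\textbf{Monotonicity.} It remains to show $g_d$ is strictly decreasing on $(d+2,\infty)$, which by differentiability there gives $g_d(n+1)<g_d(n)$ for $n>d+2$. Differentiating,
\[
g_d'(x)=\tfrac12\sum_{j=1}^d\psi'\!\Bigl(\tfrac{x-j}{2}\Bigr)-\frac{d}{x}-\frac{d(d+3)}{(x-d-2)^2},
\]
so what is needed is an upper bound on the trigamma sum. From the series \eqref{eq:trigamma_series} and the integral comparison for its decreasing summand one has $\psi'(s)<\tfrac1s+\tfrac1{s^2}$, hence $\tfrac12\psi'\bigl(\tfrac{x-j}{2}\bigr)<\tfrac1{x-j}+\tfrac2{(x-j)^2}$; using $x-j\ge x-d>2$ for $1\le j\le d$ one gets $\sum_{j=1}^d\bigl(\tfrac1{x-j}-\tfrac1x\bigr)\le\tfrac{d(d+1)}{2x(x-d)}$ and $\sum_{j=1}^d\tfrac1{(x-j)^2}\le\tfrac{d}{(x-d)^2}$. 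Setting $u=x-d-2>0$ and clearing $u^2$, the bound $g_d'(x)<0$ reduces to $\tfrac{(d+1)u^2}{2(u+d+2)(u+2)}+\tfrac{2u^2}{(u+2)^2}<d+3$, and since the two terms on the left are bounded by $\tfrac{d+1}{2}$ and $2$ respectively, the left side is $<\tfrac{d+5}{2}\le d+3$ for every $d\ge1$. (Alternatively, one can use the shift identity $g_d(x)=f_d(x-1)+d\bigl(\tfrac{2}{x-d-2}-\log\tfrac{x}{x-1}\bigr)$ together with the elementary fact that the correction term is decreasing on $(d+2,\infty)$, provided the real-variable monotonicity of $f_d$ from Theorem~\ref{thm:main} is in hand.)

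\textbf{Main obstacle.} The one delicate point is this monotonicity step: one needs a trigamma upper bound valid for all $d\ge1$ and sharp enough uniformly up to the pole at $x=d+2$. The natural pitfall — and, per the authors, the source of the error corrected in \eqref{eq:trigamma_upper_simple} — is the direction of the integral comparison: the $k=0$ term of $\sum_{k\ge0}(s+k)^{-2}$ must be added back rather than absorbed, and the seemingly sharper $\psi'(s)<\tfrac1s+\tfrac1{2s^2}$ is in fact false and would invalidate the argument.
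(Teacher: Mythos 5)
Your proposal is correct and follows essentially the same route as the paper: whitening to $\mathcal N(0,I_d)$, independence of $\bar X$ from the $(n-1)$-degree-of-freedom Wishart scatter matrix, the Bartlett decomposition plus the inverse-Wishart mean for the closed form, and then showing $g_d'(x)<0$ via a trigamma upper bound. The only cosmetic difference is in the last step: the paper uses the chained bound $\psi_1(t)<\tfrac{1}{t-1}$ from Lemma~\ref{lem:trigamma_upper_simple}, which collapses the derivative estimate to a one-line algebraic identity, whereas you work directly with $\psi_1(s)<\tfrac1s+\tfrac1{s^2}$ and carry out a slightly longer but equally valid computation.
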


In fact, both Gaussian settings can be reduced to the isotropic case $P_*=\mathcal N(0,I_d)$ via coordinate changes, which explains why the above formulas for $\mathcal E_{n,d}$ and $\widetilde{\mathcal E}_{n,d}$ have no dependence on $\mu_*$ or $\Sigma_*$.
We derive the explicit polygamma function formulas in Theorems~\ref{thm:main} and~\ref{thm:full_main}, and deduce monotonicity using derivative estimates for the trigamma function $\psi_1=\psi'$.

\medskip
\noindent\textbf{Gamma-family scale estimation.}
A similar monotonicity extends to Gamma random variables.
We fix a known shape parameter $\alpha>0$ and seek to estimate the scale parameter $\theta_*$ from IID observations $X_i\sim\mathrm{Gamma}(\alpha,\theta_*)$. The MLE is
\[
\widehat\theta_n=\frac{1}{n\alpha}\sum_{i=1}^n X_i.
\]
In Section~\ref{sec:forward_gamma} we prove the following.

\begin{theorem}
\label{thm:gamma_forward_KL}
If $n\alpha>1$, then
\[
\mathbb E\Big[D_{\mathrm{KL}}(\mathrm{Gamma}(\alpha,\theta_\ast)\,\|\,\mathrm{Gamma}(\alpha,\widehat\theta_n))\Big]
=h_{\alpha}(n)=
\alpha\Big(\psi(n\alpha)-\log(n\alpha)+\frac{1}{n\alpha-1}\Big),
\]
where $\psi$ is the digamma function, and this quantity is strictly decreasing in $n$.
If $n\alpha\le 1$, the expectation is $+\infty$.
\end{theorem}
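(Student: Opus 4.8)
The plan is to follow the same two‑step template as in the Gaussian case: obtain an exact closed form for $\mathcal E_n:=\mathbb E[D_{\mathrm{KL}}(\mathrm{Gamma}(\alpha,\theta_\ast)\,\|\,\mathrm{Gamma}(\alpha,\widehat\theta_n))]$, and then prove monotonicity by differentiating the resulting polygamma expression. First I would record the KL divergence between two Gamma laws of common shape: expanding the densities of $\mathrm{Gamma}(\alpha,\theta_1)$ and $\mathrm{Gamma}(\alpha,\theta_2)$ and taking the expectation under the first law (using $\mathbb E[X]=\alpha\theta_1$) gives the exact identity
\[
D_{\mathrm{KL}}\bigl(\mathrm{Gamma}(\alpha,\theta_1)\,\|\,\mathrm{Gamma}(\alpha,\theta_2)\bigr)
=\alpha\Bigl(\log\tfrac{\theta_2}{\theta_1}+\tfrac{\theta_1}{\theta_2}-1\Bigr),
\]
which depends only on the ratio $\theta_1/\theta_2$; this is exactly what will eliminate the dependence on $\theta_\ast$.

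Next, since the $X_i$ are independent with a common scale, $\sum_{i=1}^n X_i\sim\mathrm{Gamma}(n\alpha,\theta_\ast)$, so $\widehat\theta_n/\theta_\ast=W/(n\alpha)$ with $W\sim\mathrm{Gamma}(n\alpha,1)$. Substituting $\theta_1=\theta_\ast$, $\theta_2=\widehat\theta_n$ above and taking expectations,
\[
\mathcal E_n=\alpha\Bigl(\mathbb E[\log W]-\log(n\alpha)+n\alpha\,\mathbb E[1/W]-1\Bigr).
\]
I would then use two standard facts for $W\sim\mathrm{Gamma}(m,1)$: $\mathbb E[\log W]=\psi(m)$, and $\mathbb E[1/W]=\Gamma(m-1)/\Gamma(m)=1/(m-1)$ when $m>1$ while $\mathbb E[1/W]=+\infty$ when $m\le 1$ (after multiplying by $1/w$ the density behaves like $w^{m-2}$ near $0$, whereas the $\log W$ term is always integrable). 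With $m=n\alpha$ this gives $\mathcal E_n=h_\alpha(n)$ for $n\alpha>1$ and $\mathcal E_n=+\infty$ for $n\alpha\le 1$, after simplifying $\tfrac{n\alpha}{n\alpha-1}-1=\tfrac{1}{n\alpha-1}$.

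For the monotonicity statement it suffices to show that $\phi(t):=\psi(t)-\log t+\tfrac1{t-1}$ is strictly decreasing on $(1,\infty)$, since $h_\alpha(n)=\alpha\,\phi(n\alpha)$ and, once $n\alpha>1$, replacing $n$ by $n+1$ strictly increases the argument by $\alpha>0$ (the regime $n\alpha\le 1$ is already covered by the $+\infty$ assertion). Differentiating and using the trigamma series $\psi_1(t)=\sum_{k\ge 0}(t+k)^{-2}$ gives $\phi'(t)=\psi_1(t)-\tfrac1t-\tfrac1{(t-1)^2}$. Bounding each term of the tail by the integral over the unit interval immediately to its left, $(t+k)^{-2}<\int_{k-1}^{k}(t+x)^{-2}\,dx$ (valid because $x\mapsto(t+x)^{-2}$ is strictly decreasing), and summing over $k\ge 1$, yields $\psi_1(t)<t^{-2}+\int_0^{\infty}(t+x)^{-2}\,dx=t^{-2}+t^{-1}$, whence $\phi'(t)<t^{-2}-(t-1)^{-2}<0$ for all $t>1$.

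I expect the only subtle point to be orienting this last integral comparison correctly: to get an \emph{upper} bound on $\psi_1$ one must dominate each series term by the integral over the \emph{preceding} unit interval, not the following one; the latter choice gives a bound in the wrong direction (indeed, the paper notes that precisely such a sign slip occurred here and had to be corrected). All the remaining ingredients — the Gamma–Gamma KL formula, the sum‑of‑Gammas identity, and the two moment identities for $\mathrm{Gamma}(m,1)$ — are entirely routine, so the argument should be short.
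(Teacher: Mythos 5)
Your proposal is correct and follows essentially the same route as the paper: the same Gamma--Gamma KL formula, the same reduction via $\sum_i X_i\sim\mathrm{Gamma}(n\alpha,\theta_\ast)$ together with the $\mathbb E[\log V]$ and $\mathbb E[V^{-1}]$ identities, and the same monotonicity argument via $\psi_1(t)<\tfrac1t+\tfrac1{t^2}<\tfrac1t+\tfrac1{(t-1)^2}$. The only cosmetic difference is that you derive the trigamma upper bound by an integral comparison (correctly oriented), whereas the paper obtains the identical bound by telescoping $\sum_{m\ge1}\tfrac{1}{(t+m)(t+m-1)}=\tfrac1t$.
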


The special case $\alpha=1$ of Theorem~\ref{thm:gamma_forward_KL} recovers scale estimation for exponential random variables.
Also notably, the case $2\alpha=d\in\mathbb N$ is equivalent to estimating the scale of a centered Gaussian which is \emph{known to be isotropic}, i.e. of the form $\mathcal N(0,\theta_* I_d)$. Indeed here the squared norms $\|Z_i\|^2$ are IID $\mathrm{Gamma}(d/2,2\theta_*)$ and are sufficient statistics for $\theta_*$.

\medskip
\noindent\textbf{Complete Monotonicity.}
An unreleased prototype of a longer-thinking-time version of GPT-5.2 Pro completed the proofs of monotonicity in Theorems~\ref{thm:main}, \ref{thm:full_main}, and \ref{thm:gamma_forward_KL} using a different method.
This approach rewrites the explicit functions above as Laplace transforms of positive kernels, i.e. in the form
\begin{equation}
\label{eq:laplace-formula}
L(x)=\int_0^{\infty} e^{-xt} d\mu(t)
\end{equation}
where $\mu(t)$ is a positive finite measure on $[0,\infty)$ (for example a non-negative density).
We detail this in Section~\ref{sec:laplace_monotonicity}, and obtain the following. 

\begin{theorem}
\label{thm:complete-monotone}
The functions $f_d,g_d,h_{\alpha}$ each have representations of the form \eqref{eq:laplace-formula}, such that the integral converges absolutely on the respective half-lines $(d+1,\infty), (d+2,\infty),(1/\alpha,\infty)$.
In particular, each $L\in\{f_d,g_d,h_{\alpha}\}$ satisfies the complete monotonicity inequalities
\begin{equation}
\label{eq:alternating-derivatives}
(-1)^k L^{(k)}(x)>0
\end{equation}
for all $x$ in the domain of $L$ and all $k\in\mathbb N$, where $L^{(k)}$ denotes the $k$-th derivative.
\end{theorem}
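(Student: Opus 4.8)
The plan is to produce, for each $L\in\{f_d,g_d,h_\alpha\}$, an explicit representation
\[
L(x)=c_L+\int_0^\infty e^{-xt}\,K_L(t)\,dt,
\]
with a constant $c_L\ge 0$ and a density $K_L$ that is continuous on $[0,\infty)$ and \emph{strictly positive} on $(0,\infty)$, the integral converging absolutely precisely on the stated half-line. This is \eqref{eq:laplace-formula} with $d\mu(t)=c_L\,\delta_0(t)+K_L(t)\,dt$, and differentiating under the integral sign (justified below) then gives $(-1)^kL^{(k)}(x)=\int_0^\infty t^k e^{-xt}K_L(t)\,dt>0$ for $k\ge 1$ and $L(x)=c_L+\int_0^\infty e^{-xt}K_L(t)\,dt>0$ for $k=0$, which is exactly \eqref{eq:alternating-derivatives}. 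The only inputs are two standard integral identities for $z,a,b>0$,
\[
\psi(z)-\log z=-\int_0^\infty e^{-zt}\,\phi(t)\,dt,\qquad \phi(t):=\frac{1}{1-e^{-t}}-\frac1t,\qquad \log b-\log a=\int_0^\infty\frac{e^{-at}-e^{-bt}}{t}\,dt,
\]
together with two elementary bounds: $0<\phi(t)<1$ for $t>0$ (equivalently $1-e^{-t}<t<e^t-1$) and $\tfrac{e^u-1}{u}<e^u$ for $u>0$, the latter giving $\tfrac{e^{jt}-1}{t}\le je^{jt}$ for every integer $j\ge 0$.

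For $f_d$ I would substitute $\psi\!\big(\tfrac{x-j+1}{2}\big)=\log(x-j+1)-\log 2-2\int_0^\infty e^{-(x-j+1)t}\phi(2t)\,dt$, collect the logarithms via Frullani into $\sum_{j=1}^d\big(\log(x-j+1)-\log x\big)=-\int_0^\infty\frac{e^{-xt}}{t}\big(\sum_{k=0}^{d-1}e^{kt}-d\big)dt$, and expand the rational term as $\frac{xd}{x-d-1}=d+d(d+1)\int_0^\infty e^{-xt}e^{(d+1)t}\,dt$ (all constituent integrals converging absolutely on $x>d+1$, so the rearrangements are legitimate). Writing $\sum_{k=0}^{d-1}e^{kt}=\tfrac{e^{dt}-1}{e^t-1}$ and splitting the polynomial part $d(d+1)e^{(d+1)t}$ evenly over the $d$ geometric summands yields $c_{f_d}=d$ and $K_{f_d}(t)=\sum_{k=0}^{d-1}\big[(d+1)e^{(d+1)t}-2\phi(2t)e^{kt}-\tfrac{e^{kt}-1}{t}\big]$, which is strictly positive since for each $0\le k\le d-1$ one has $2\phi(2t)e^{kt}+\tfrac{e^{kt}-1}{t}<(2+k)e^{kt}\le(d+1)e^{kt}<(d+1)e^{(d+1)t}$. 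The functions $g_d$ and $h_\alpha$ are handled identically, using $\frac{(x+1)d}{x-d-2}=d+d(d+3)\int_0^\infty e^{-xt}e^{(d+2)t}\,dt$ and $\frac{\alpha}{x\alpha-1}=\int_0^\infty e^{-xt}e^{t/\alpha}\,dt$; this gives $c_{g_d}=d$, $c_{h_\alpha}=0$, $K_{g_d}(t)=\sum_{j=1}^d\big[(d+3)e^{(d+2)t}-2\phi(2t)e^{jt}-\tfrac{e^{jt}-1}{t}\big]$ and $K_{h_\alpha}(t)=e^{t/\alpha}-\phi(t/\alpha)$, all strictly positive by the same one-line comparison (and for $h_\alpha$ simply because $e^{t/\alpha}>1>\phi(t/\alpha)$). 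Finally, each $K_L$ is continuous at $0$ because the $\tfrac{e^{jt}-1}{t}$ and $\phi$ terms have removable singularities there, and as $t\to\infty$ it grows like $e^{(d+1)t}$, $e^{(d+2)t}$, $e^{t/\alpha}$ respectively; hence $\int_0^\infty e^{-xt}K_L(t)\,dt$ converges absolutely exactly for $x>d+1$, $x>d+2$, $x>1/\alpha$, matching the domains of $f_d,g_d,h_\alpha$, and the exponential slack permits differentiating under the integral arbitrarily often on the open half-line.

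The single substantive step is the positivity of the densities $K_L$: a priori this is an inequality between a polygamma-type function and a rational function that must hold for every $t>0$, and it is not obvious in advance that no cancellation goes wrong. The reason it becomes a one-liner is structural. After expanding the geometric sums, the pole of the rational part of each $L$ sits exactly at the finiteness threshold ($x=d+1$, $d+2$, or $1/\alpha$), which is just large enough that $(d+1)e^{(d+1)t}$ dominates $(k+2)e^{kt}$ for every $k\le d-1$ (respectively $(d+3)e^{(d+2)t}$ dominates $(j+2)e^{jt}$ for $j\le d$); distributing that polynomial part uniformly over the $d$ geometric summands reduces everything to this comparison plus the baby bounds $\phi<1$ and $\tfrac{e^u-1}{u}<e^u$. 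The only place care is needed is the bookkeeping of the polynomial divisions and the precise exponential growth rates, so that the convergence half-lines come out exactly as claimed.
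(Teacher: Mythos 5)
Your proof is correct, and it takes a genuinely different (and arguably more direct) route than the paper. The paper works one derivative down: it uses the trigamma representation $\psi_1(u)=\int_0^\infty \frac{t e^{-ut}}{1-e^{-t}}\,dt$ to write $-f_d'$, $-g_d'$, $-h_\alpha'$ as Laplace transforms of explicit kernels ($B_d$, $\widetilde B_d$, $\breve B$), proves those kernels positive by the bound $\frac{2t}{1-e^{-2t}}\le e^t$ plus a monotonicity argument ($F_d(0)=0$, $F_d'>0$), and then recovers the statement about $f_d,g_d,h_\alpha$ themselves by appealing to non-negativity of the KL risk together with the converse direction of Bernstein's theorem. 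You instead represent $L$ itself, using the digamma identity $\psi(z)-\log z=-\int_0^\infty e^{-zt}\phi(t)\,dt$ and Frullani for the $\log(x-j+1)-\log x$ terms, which produces the measure $c_L\delta_0+K_L(t)\,dt$ directly; this delivers exactly what the theorem asserts without the detour through $-L'$ and the characterization of completely monotone functions, and your kernel positivity reduces to the pointwise bounds $\phi<1$ and $\frac{e^u-1}{u}<e^u$ with no differentiation needed. The trade-off is essentially aesthetic: the paper's route reuses the derivative formulas already computed in the monotonicity proofs of Theorems~\ref{thm:main}--\ref{thm:gamma_forward_KL}, while yours is self-contained at the level of $L$ and makes the atom at $0$ (the constant $d$, reflecting $\lim_{x\to\infty}f_d(x)=d$) explicit. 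One cosmetic remark: your $\mu$, like the paper's, is not a finite measure (the densities grow like $e^{(d+1)t}$ etc.), but the theorem only requires absolute convergence of $\int_0^\infty e^{-xt}\,d\mu(t)$ on the stated half-lines, which you verify correctly.
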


The inequalities \eqref{eq:alternating-derivatives} follow directly from \eqref{eq:laplace-formula} by differentiation under the integral sign.
In fact the representation \eqref{eq:laplace-formula} famously characterizes all completely monotone functions; see \cite{bernstein1929fonctions,merkle2014completely} or \cite[Chapter 14]{lax2014functional}.
For integer $n$, the analogous discrete-difference properties follow by integration: for $k\in\mathbb N$ with all terms finite, one has for instance
\[
\binom{k}{0}\mathcal E_{n,d}
-
\binom{k}{1}\mathcal E_{n+1,d}
+
\binom{k}{2}\mathcal E_{n+2,d}
\dots
+ (-1)^k \binom{k}{k} \mathcal E_{n+k,d} > 0.
\]
In particular setting $k=1$ recovers ordinary monotonicity.
The case $k=2$ (i.e. that $\mathcal E_{n,d}-2\mathcal E_{n+1,d}+\mathcal E_{n+2,d}>0$) also has a natural interpretation: although the added value of the $n$-th datapoint is strictly positive, it is also strictly decreasing with $n$.

\medskip
\noindent\textbf{Reverse KL Divergence.}
We also consider the same questions under reverse KL divergence
\[
\widetilde{\mathcal E}_n=\mathbb E[D_{\mathrm{KL}}(P_{\widehat\theta_n}\|P_\ast)].
\]
In fact, monotonicity of the reverse KL divergence for the MLE is known to hold in the case of discrete random variables; here the model class consists of all probability distributions on the support, which means the MLE is the empirical distribution. 
See for instance \cite[Chapter 2, Problem 34(b)]{cover1999elements}.)
We observe (likely not for the first time) that the proof in the discrete case extends to the much more general setting of exponential families.
Recall that an exponential
family consists of densities of the form
\[
p_\theta(x)=\exp(\langle \theta, T(x)\rangle - A(\theta)), \quad\theta\in\Theta\subseteq\mathbb R^k
\]
with sufficient statistic $T$ and log-partition function $A$.
The family is regular if the mean map $\mu(\theta)=\mathbb E_\theta[T(X)]=\nabla A(\theta)$ is one-to-one and its
range $\mathcal M$ is open.
The convexity of the reverse KL divergence as a function of $\bar T_n:=\frac1n\sum_{i=1}^n T(X_i)$ yields the following. (See also \cite{marshall1965inequality,doob1971martingale} for older applications of the underlying convex domination idea.)

\begin{proposition}[Reverse-KL data monotonicity in exponential families]
\label{prop:reverse_exp_family_monotone}
Let $X_1,\dots,X_n\sim p_{\theta_\ast}$ IID in a regular exponential family.
Assume that for each $n$ in a range of interest the MLE exists, lies in the interior, and hence satisfies
$\widehat\mu_n:=\nabla A(\widehat\theta_n)=\bar T_n$ almost surely.
Then the expected reverse KL risk is nonincreasing:
\[
\mathbb E\!\left[D_{\mathrm{KL}}(p_{\widehat\theta_{n+1}}\,\|\,p_{\theta_\ast})\right]
\le
\mathbb E\!\left[D_{\mathrm{KL}}(p_{\widehat\theta_{n}}\,\|\,p_{\theta_\ast})\right].
\]
If in addition $A^\ast$ is strictly convex on $\mathcal M$ (equivalently, the family is minimal) and $T(X)$ is
non-degenerate under $p_{\theta_\ast}$, then the inequality is strict.
\end{proposition}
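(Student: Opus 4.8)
The plan is to reduce the statement to the classical principle that the expectation of a convex function of an empirical average is nonincreasing in the sample size. The key observation is that, in the \emph{mean parametrization}, the reverse KL divergence is an affine shift of the convex conjugate $A^\ast$ of the log-partition function, and hence a convex function of the natural MLE output $\bar T_n:=\frac1n\sum_{i=1}^n T(X_i)$ --- a feature with no analog for forward KL.

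First I would compute, for $\theta\in\Theta$ with $\mu=\mu(\theta)=\nabla A(\theta)$,
\[
D_{\mathrm{KL}}(p_\theta\,\|\,p_{\theta_\ast})
=\langle\theta-\theta_\ast,\,\mu\rangle-A(\theta)+A(\theta_\ast),
\]
and then apply the Legendre identity $A(\theta)=\langle\theta,\mu\rangle-A^\ast(\mu)$, valid in a regular family since $\theta=\nabla A^\ast(\mu)$ there, to obtain
\[
D_{\mathrm{KL}}(p_\theta\,\|\,p_{\theta_\ast})=\Phi(\mu):=A^\ast(\mu)-\langle\theta_\ast,\,\mu\rangle+A(\theta_\ast).
\]
Since $A^\ast$ is convex and the remaining terms are affine in $\mu$, the function $\Phi$ is convex, and it is strictly convex on $\mathcal M$ exactly when $A^\ast$ is (equivalently, when the family is minimal); concretely $\Phi(\mu)=B_{A^\ast}(\mu,\mu_\ast)$ is the Bregman divergence of $A^\ast$ with $\mu_\ast=\mu(\theta_\ast)$. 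Under the hypothesis that the MLE lies in the interior for the relevant $n$, so that $\widehat\mu_n=\nabla A(\widehat\theta_n)=\bar T_n\in\mathcal M$ almost surely, we get $D_{\mathrm{KL}}(p_{\widehat\theta_n}\,\|\,p_{\theta_\ast})=\Phi(\bar T_n)$ a.s.

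Next I would run the standard leave-one-out symmetrization. For $j=1,\dots,n+1$ put $\bar T_n^{(-j)}:=\frac1n\sum_{i\ne j}T(X_i)$, the empirical mean of the $n$ samples obtained after deleting $X_j$; each $\bar T_n^{(-j)}$ has the same law as $\bar T_n$, and in particular lies in $\mathcal M$ almost surely. Using the identity $\bar T_{n+1}=\frac{1}{n+1}\sum_{j=1}^{n+1}\bar T_n^{(-j)}$ together with convexity of $\Phi$, Jensen's inequality gives the pointwise bound
\[
\Phi(\bar T_{n+1})\le\frac{1}{n+1}\sum_{j=1}^{n+1}\Phi\bigl(\bar T_n^{(-j)}\bigr).
\]
Taking expectations (legitimate in $[0,\infty]$ since $\Phi=D_{\mathrm{KL}}(\cdot\,\|\,p_{\theta_\ast})\ge0$) and using that each summand on the right is distributed as $\Phi(\bar T_n)$ yields $\mathbb E[\Phi(\bar T_{n+1})]\le\mathbb E[\Phi(\bar T_n)]$, which is the claimed monotonicity.

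For the strict version, when $\Phi$ is strictly convex the Jensen step above is strict unless the points $\bar T_n^{(-1)},\dots,\bar T_n^{(-(n+1))}$ all coincide; but $\bar T_n^{(-j)}=\bar T_n^{(-k)}$ forces $T(X_j)=T(X_k)$, so this degenerate event is contained in $\{T(X_1)=\dots=T(X_{n+1})\}$, which has probability strictly less than $1$ once $T(X)$ is non-degenerate under $p_{\theta_\ast}$. Hence strict Jensen holds on an event of positive probability, and (with the risks finite, as in our applications) the expected inequality becomes strict. I do not expect a genuine obstacle here; the only points requiring care are that $\Phi$ must be evaluated at points of $\mathcal M$ --- automatic because each $\bar T_n^{(-j)}$ and $\bar T_{n+1}$ is distributed like an MLE mean, hence lies in $\mathcal M$ a.s. --- and the routine handling of possibly infinite expectations via nonnegativity of KL. The substantive step is the first one: identifying reverse KL with the convex function $\Phi$ of the empirical sufficient-statistic average, which is exactly what unlocks the convexity-plus-subsampling argument.
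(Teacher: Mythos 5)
Your proposal is correct and matches the paper's argument: both identify the reverse KL divergence with the Bregman divergence $B_{A^\ast}(\bar T_n,\mu_\ast)$, a convex (affine-plus-$A^\ast$) function of the empirical sufficient-statistic mean, and then apply the leave-one-out Jensen/symmetrization step (your identity $\bar T_{n+1}=\frac{1}{n+1}\sum_j \bar T_n^{(-j)}$ is the deterministic form of the paper's randomized-index conditional-expectation version). The strictness discussion via non-coincidence of the leave-one-out means is likewise the same as the paper's.
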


The Gaussian settings are both special cases of the above, with $T(x)=xx^{\top}$ and $T(x)=(x,xx^{\top})$ respectively.
Below we provide alternate proofs of monotonicity for the reverse KL risk of Gaussian estimation, which were derived by GPT-5.2 Pro prior to Proposition~\ref{prop:reverse_exp_family_monotone} and again come with explicit formulas involving the digamma function.
Of course, Proposition~\ref{prop:reverse_exp_family_monotone} encompasses other classical examples including Gamma, Poisson, and binomial random variables and many more.

\medskip
\begin{proposition}
\label{prop:reverse_known_mean}
Let $X_1,\dots,X_n\stackrel{IID}{\sim}\mathcal N(0,\Sigma_\ast)$ in $\mathbb R^d$, with MLE $\widehat\Sigma_n=\frac1n\sum_{i=1}^n X_iX_i^\top$.
If $n\ge d$, then the reverse KL risk is given by
\[
\mathbb E\Big[D_{\mathrm{KL}}\big(\mathcal N(0,\widehat\Sigma_n)\,\|\,\mathcal N(0,\Sigma_\ast)\big)\Big]
=\frac12\Big(d\log(n/2)-\sum_{j=1}^d \psi\!\Big(\frac{n-j+1}{2}\Big)\Big),
\]
and the expectation is strictly decreasing in $n$. If $n<d$, then the expectation is $+\infty$.
\end{proposition}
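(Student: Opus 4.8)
The plan is to reduce to the isotropic case $\Sigma_\ast=I_d$, evaluate an expected Wishart log-determinant in closed form, and then establish monotonicity by differentiating the resulting digamma expression.

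First I would use the exact formula for the KL divergence between centered Gaussians,
\[
D_{\mathrm{KL}}\big(\mathcal N(0,A)\,\|\,\mathcal N(0,B)\big)=\tfrac12\big(\tr(B^{-1}A)-d-\log\det(B^{-1}A)\big),
\]
and whiten the data: with $Y_i:=\Sigma_\ast^{-1/2}X_i\sim\mathcal N(0,I_d)$ and $\widehat W_n:=\tfrac1n\sum_{i=1}^n Y_iY_i^\top=\Sigma_\ast^{-1/2}\widehat\Sigma_n\Sigma_\ast^{-1/2}$, the matrix $\Sigma_\ast^{-1}\widehat\Sigma_n$ is conjugate to $\widehat W_n$ and hence has the same trace and determinant. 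Therefore
\[
D_{\mathrm{KL}}\big(\mathcal N(0,\widehat\Sigma_n)\,\|\,\mathcal N(0,\Sigma_\ast)\big)=\tfrac12\big(\tr\widehat W_n-d-\log\det\widehat W_n\big),
\]
which in particular does not depend on $\Sigma_\ast$. The key simplification is $\mathbb E[\tr\widehat W_n]=\tfrac1n\sum_i\mathbb E\|Y_i\|^2=d$, so the trace term cancels in expectation and the reverse-KL risk equals $-\tfrac12\,\mathbb E[\log\det\widehat W_n]=\tfrac12\big(d\log n-\mathbb E[\log\det S_n]\big)$, where $S_n:=n\widehat W_n\sim\mathrm{Wishart}_d(n,I_d)$.

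Next I would evaluate $\mathbb E[\log\det S_n]$. For $n\ge d$, the Bartlett decomposition gives $\det S_n\overset{d}{=}\prod_{j=1}^d\chi^2_{n-j+1}$ with independent chi-square factors, and, using that $\chi^2_k/2\sim\mathrm{Gamma}(k/2,1)$, the identity $\mathbb E[\log\chi^2_k]=\log 2+\psi(k/2)$ is a reparametrization of Lemma~\ref{lem:ElogGamma}. Hence $\mathbb E[\log\det S_n]=d\log 2+\sum_{j=1}^d\psi\big(\tfrac{n-j+1}{2}\big)$, and substituting back recovers the stated formula
\[
\mathbb E\big[D_{\mathrm{KL}}(\mathcal N(0,\widehat\Sigma_n)\|\mathcal N(0,\Sigma_\ast))\big]=\tfrac12\Big(d\log(n/2)-\sum_{j=1}^d\psi\big(\tfrac{n-j+1}{2}\big)\Big).
\]
(The same quantity $\mathbb E[\log\det S_n]$ already appears implicitly in the proof of Theorem~\ref{thm:main}, so it could instead be cited from there.) When $n<d$, $\widehat\Sigma_n$ is almost surely singular, so $\mathcal N(0,\widehat\Sigma_n)$ is supported on a proper subspace and is not absolutely continuous with respect to $\mathcal N(0,\Sigma_\ast)$; hence the reverse KL divergence is $+\infty$ for every realization and the expectation is $+\infty$.

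Finally, for monotonicity I would regard the right-hand side as a smooth function $R(x):=\tfrac12\big(d\log(x/2)-\sum_{j=1}^d\psi(\tfrac{x-j+1}{2})\big)$ on $x>d-1$ and show $R'(x)<0$, which gives $R(n+1)<R(n)$ for all integers $n\ge d$. Differentiating, $R'(x)=\tfrac{d}{2x}-\tfrac14\sum_{j=1}^d\psi'\big(\tfrac{x-j+1}{2}\big)$, and the elementary strict bound $\psi'(t)>1/t$ for $t>0$ — obtained by comparing the series $\psi'(t)=\sum_{k\ge0}(t+k)^{-2}$ (see \eqref{eq:polygamma-functions}) with $\int_0^\infty(t+s)^{-2}\,ds=1/t$ — yields
\[
R'(x)<\frac{d}{2x}-\frac12\sum_{j=1}^d\frac{1}{x-j+1}\le\frac{d}{2x}-\frac{d}{2x}=0,
\]
where the last inequality uses $x-j+1\le x$ for $1\le j\le d$. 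This treats the boundary cases $n=d$ (where $\tfrac{x-j+1}{2}$ reaches $\tfrac12$, still inside the domain of $\psi$) and $d=1$ uniformly; monotonicity alone (without the closed form) is also a special case of Proposition~\ref{prop:reverse_exp_family_monotone} with $T(x)=xx^\top$. I expect the main obstacle to be the Wishart log-determinant identity: pinning down the additive constant $d\log 2$ and the degrees-of-freedom shifts $n-j+1$ correctly, and carefully justifying the $+\infty$ value when $n<d$.
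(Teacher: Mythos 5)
Your proposal is correct and follows essentially the same route as the paper's proof: whitening to the isotropic case, cancellation of the trace term in expectation, the Bartlett decomposition plus Lemma~\ref{lem:ElogGamma} for $\mathbb E[\log\det W]$, and monotonicity via the bound $\psi_1(t)>1/t$ from \eqref{eq:trigamma_lower}. The only nitpick is a citation slip (the trigamma series is \eqref{eq:trigamma_series}, not \eqref{eq:polygamma-functions}); your absolute-continuity argument for the $n<d$ case is if anything slightly more careful than the paper's.
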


\medskip
\begin{proposition}
\label{prop:reverse_unknown_mean}
Let $X_1,\dots,X_n\sim\mathcal N(\mu_\ast,\Sigma_\ast)$ in $\mathbb R^d$, with MLEs
\[
\widehat\mu_n=\bar X,\qquad
\widehat\Sigma_n=\frac1n\sum_{i=1}^n (X_i-\bar X)(X_i-\bar X)^\top.
\]
If $n\ge d+1$, then the reverse KL risk is given by
\[
\mathbb E\Big[D_{\mathrm{KL}}\big(\mathcal N(\widehat\mu_n,\widehat\Sigma_n)\,\|\,\mathcal N(\mu_\ast,\Sigma_\ast)\big)\Big]
=\frac12\Big(d\log(n/2)-\sum_{j=1}^d \psi\!\Big(\frac{n-j}{2}\Big)\Big),
\]
and the expectation is strictly decreasing in $n$. If $n\le d$, then the expectation is $+\infty$.
\end{proposition}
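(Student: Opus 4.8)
The plan is to follow the template of Proposition~\ref{prop:reverse_known_mean}: whiten the data to reduce to the isotropic case, evaluate the expected reverse KL divergence through standard Wishart identities, and then deduce strict monotonicity from a trigamma bound. The one structural change relative to the known-mean case is that estimating $\mu_\ast$ costs a Wishart degree of freedom, which is precisely what shifts the finiteness threshold from $n\ge d$ to $n\ge d+1$ and the digamma arguments from $(n-j+1)/2$ to $(n-j)/2$.

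First I would write $X_i=\mu_\ast+\Sigma_\ast^{1/2}Z_i$ with $Z_i\stackrel{IID}{\sim}\mathcal N(0,I_d)$, so that $\widehat\mu_n-\mu_\ast=\Sigma_\ast^{1/2}\bar Z$ and $\widehat\Sigma_n=\Sigma_\ast^{1/2}\widehat S_n\Sigma_\ast^{1/2}$ with $\widehat S_n=\frac1n\sum_{i=1}^n(Z_i-\bar Z)(Z_i-\bar Z)^\top$. Substituting into the closed form for the KL divergence between two Gaussians and simplifying, every appearance of $(\mu_\ast,\Sigma_\ast)$ cancels, leaving the identity
\[
D_{\mathrm{KL}}\big(\mathcal N(\widehat\mu_n,\widehat\Sigma_n)\,\|\,\mathcal N(\mu_\ast,\Sigma_\ast)\big)=\tfrac12\big(\tr\widehat S_n+\|\bar Z\|^2-d-\log\det\widehat S_n\big).
\]
This already explains the absence of $(\mu_\ast,\Sigma_\ast)$ from the claimed formula.

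Next I would take expectations term by term, using only linearity. Here $\bar Z\sim\mathcal N(0,I_d/n)$ gives $\mathbb E\|\bar Z\|^2=d/n$; $n\widehat S_n\sim W_d(n-1,I_d)$ gives $\mathbb E[\tr\widehat S_n]=(n-1)d/n$; and the Bartlett decomposition $\det(n\widehat S_n)\stackrel{d}{=}\prod_{j=1}^d\chi^2_{n-j}$ with independent factors, combined with $\mathbb E[\log\chi^2_k]=\psi(k/2)+\log 2$, gives $\mathbb E[\log\det\widehat S_n]=\sum_{j=1}^d\psi\big(\tfrac{n-j}{2}\big)-d\log(n/2)$. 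Since $(n-1)d/n+d/n=d$ cancels the $-d$ exactly, this yields $\mathbb E[D_{\mathrm{KL}}]=-\tfrac12\mathbb E[\log\det\widehat S_n]=\tfrac12\big(d\log(n/2)-\sum_{j=1}^d\psi((n-j)/2)\big)$, as claimed, and all three expectations are finite because each $\chi^2_{n-j}$ has $n-j\ge1$ degrees of freedom (so $\mathbb E\,|\log\chi^2_{n-j}|<\infty$). When $n\le d$ the matrix $W_d(n-1,I_d)$ is almost surely rank-deficient, so $\widehat\Sigma_n$ is singular, $\mathcal N(\widehat\mu_n,\widehat\Sigma_n)$ is supported on a proper affine subspace, and is not absolutely continuous with respect to the nondegenerate $\mathcal N(\mu_\ast,\Sigma_\ast)$; the KL divergence and hence its expectation are then $+\infty$.

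Finally, for monotonicity I would regard the resulting quantity as a function $R(x):=\tfrac12\sum_{j=1}^d\big(\log(x/2)-\psi((x-j)/2)\big)$ of a real variable $x>d$, whose derivative is $R'(x)=\tfrac12\sum_{j=1}^d\big(\tfrac1x-\tfrac12\psi_1((x-j)/2)\big)$ with $\psi_1=\psi'$ the trigamma function. Using the elementary bound $\psi_1(y)=\sum_{k\ge0}(y+k)^{-2}>\int_0^\infty(y+t)^{-2}\,dt=1/y$, one gets $\tfrac12\psi_1((x-j)/2)>\tfrac1{x-j}>\tfrac1x$ for $1\le j\le d$ and $x>d$, so every summand of $R'$ is strictly negative; hence $R$ is strictly decreasing on $(d,\infty)$, and the expected reverse KL risk, which equals $R(n)$ by the first part, is strictly decreasing over $\{d+1,d+2,\dots\}$. (Plain monotonicity, though not strictness, is also immediate from Proposition~\ref{prop:reverse_exp_family_monotone}, this being the exponential family with $T(x)=(x,xx^\top)$.) I do not anticipate a genuine obstacle: the only delicate points are invoking the Wishart determinant/Bartlett identity correctly so that $\mathbb E[\log\det\widehat S_n]$ is genuinely (not merely formally) finite, and keeping the degree-of-freedom bookkeeping straight when passing from the known-mean computation to this one.
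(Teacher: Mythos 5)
Your proposal is correct and follows essentially the same route as the paper: whiten to the isotropic case, use that the centered scatter matrix is $\mathrm{Wishart}_d(n-1,I_d)$ so the trace and $\|\bar Z\|^2$ terms cancel the $-d$ in expectation, evaluate $\mathbb E[\log\det]$ via the Bartlett decomposition and $\mathbb E[\log\chi^2_k]=\psi(k/2)+\log 2$, and prove strict decrease via the trigamma lower bound $\psi_1(y)>1/y$. The only (cosmetic) difference is that you differentiate the closed-form expression $R(x)=\tfrac12\bigl(d\log(x/2)-\sum_{j}\psi((x-j)/2)\bigr)$ directly in $x$, whereas the paper reuses the monotonicity of the known-mean function $r_d$ via the substitution $n\mapsto n-1$; your version is, if anything, slightly cleaner bookkeeping.
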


\subsection{Other Related Work}

In the setting of sequential prediction, the realized risk $R_n$ at time $n$ is a function of the estimated parameter $\widehat\theta_n$ and the next sample $z_n$.
Given a well-specified prior for $\theta_*$, it is true in general that suitably defined Bayes-optimal algorithms have monotone learning curves, when averaged over the prior (see e.g. \cite[Section 4.6]{viering2022shape}).
For example, under log loss this is equivalent to the fact that posterior entropy decreases on average, a standard fact in information theory.
See \cite{haussler1991estimating} for other results in this direction.

Learning curves also play a central role in the modern literature on large-scale machine learning, often via empirical scaling laws that
relate test loss to dataset size and training compute \cite{kaplan2020scaling,hoffmann2022training}.
Relatedly, hyperparameter transfer methods aim to predict or accelerate progress along such curves across scales
\cite{yang2021tuning}.
The notable double descent phenomenon \cite{belkin2019reconciling,belkin2020two,nakkiran2021deep,hastie2022surprises,mei2022generalization,li2021minimum} is a surprising non-monotonicity property of the test loss relative to model size.
These works document striking regularities in overparameterized regimes, but they are largely
orthogonal to our focus of well-specified parametric models.
On the other hand, our Gaussian example does include high-dimensional linear regression with Gaussian covariates and errors, implying that the log-loss of the maximum likelihood linear model is data monotone, even when the noise level is one of the quantities to be estimated.

Classical theoretical analyses of learning curves in Bayesian/statistical-mechanics settings go back at least to
\cite{patarnello1987learning,seung1992statistical,opper1991calculation,amari1992four,amari1993universal}, with
additional viewpoints from PAC/VC-style estimation \cite{takahashi1993estimating,haussler1991estimating}.
The book \cite{devroye1997probabilistic} asked whether monotone Bayes-consistent algorithms exist; this was resolved positively in great generality by \cite{pestov2022universally}, and in a black-box way by \cite{BousquetEtAl2022}.
Our results show that the MLE suffices for monotonicity in several natural examples.

\section{Gamma functions and distributions}
\label{subsec:gamma_conventions}

The Gamma function and associated objects will play a key role in our proofs.
Recall that
\[
\Gamma(z)=\int_{0}^{\infty} t^{z-1}e^{-t}\,dt,\qquad z>0.
\]
The digamma and trigamma functions are defined by logarithmic differentiation:
\begin{equation}
\label{eq:polygamma-functions}
\psi(z)=\frac{d}{dz}\log\Gamma(z),
\qquad
\psi_1(z)=\psi'(z)=\frac{d^2}{dz^2}\log\Gamma(z).
\end{equation}
We use a standard series identity for $\psi_1$ (see e.g.\ \cite[Section 1.16, Equation (9)]{bateman1953higher}):
\begin{equation}
\label{eq:trigamma_series}
\psi_1(z) = \sum_{k=0}^\infty \frac{1}{(z+k)^2},\quad\forall z>0.
\end{equation}

This identity yields the following estimates which will be key in verifying monotonicity.

\begin{lemma}
\label{lem:trigamma_upper_simple}
Assuming $t>0$ in the first inequality and $t>1$ in the second, we have
\begin{equation}
\label{eq:trigamma_upper_simple}
\psi_1(t)<\frac{t+1}{t^2}<\frac{1}{t-1}.
\end{equation}
For $t>0$ we also have the lower bound:
\begin{equation}
\label{eq:trigamma_lower}
\psi_1(t)>\frac{1}{t}.
\end{equation}
\end{lemma}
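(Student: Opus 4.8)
The plan is to derive all three estimates directly from the series identity \eqref{eq:trigamma_series}, $\psi_1(t)=\sum_{k\ge 0}(t+k)^{-2}$, via elementary telescoping comparisons. First, the rightmost inequality in \eqref{eq:trigamma_upper_simple}, namely $\frac{t+1}{t^2}<\frac1{t-1}$ for $t>1$, is purely algebraic: clearing the (positive) denominators, it is equivalent to $(t+1)(t-1)<t^2$, i.e.\ $t^2-1<t^2$. So it remains to prove the trigamma bounds $\frac1t<\psi_1(t)<\frac{t+1}{t^2}$, and for these the idea is to sandwich each summand $(t+k)^{-2}$ between two "partial-fraction'' quantities $\frac1{(t+k)(t+k+1)}$ and $\frac1{(t+k-1)(t+k)}$, each of which telescopes.

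For the lower bound \eqref{eq:trigamma_lower}, I would bound each term from below using $t+k<t+k+1$:
\[
\frac{1}{(t+k)^2}>\frac{1}{(t+k)(t+k+1)}=\frac{1}{t+k}-\frac{1}{t+k+1},\qquad k\ge 0.
\]
Summing over $k\ge 0$, the right-hand side telescopes to $1/t$, giving $\psi_1(t)>1/t$; the inequality is already strict from the $k=0$ term alone.

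For the upper bound in \eqref{eq:trigamma_upper_simple}, I would split off the $k=0$ term, which contributes exactly $1/t^2$, and bound the remaining tail in the opposite direction using $t+k-1<t+k$:
\[
\frac{1}{(t+k)^2}<\frac{1}{(t+k-1)(t+k)}=\frac{1}{t+k-1}-\frac{1}{t+k},\qquad k\ge 1,
\]
so that $\sum_{k\ge 1}(t+k)^{-2}<1/t$ by telescoping, and hence $\psi_1(t)=t^{-2}+\sum_{k\ge 1}(t+k)^{-2}<t^{-2}+t^{-1}=\frac{t+1}{t^2}$. (An integral comparison with $\int_0^\infty (t+x)^{-2}\,dx=1/t$, using monotonicity of $x\mapsto(t+x)^{-2}$, would also work and is essentially the same computation.) Since every step is a one-line estimate, there is no genuine obstacle here; the only point that requires care — and, per the note following the statement, precisely where the sign was originally flipped — is selecting the correct direction of comparison in each case (bounding $(t+k)^{-2}$ \emph{below} by $\frac1{(t+k)(t+k+1)}$ versus \emph{above} by $\frac1{(t+k-1)(t+k)}$) and peeling off the right leading term so that the telescoped sum lands exactly on the target value $1/t$.
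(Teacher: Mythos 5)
Your proof is correct and follows essentially the same route as the paper: the rightmost inequality is the same algebraic check, and the upper bound on $\psi_1$ is obtained by exactly the paper's telescoping comparison $\sum_{k\ge 1}(t+k)^{-2}<\sum_{k\ge 1}\frac{1}{(t+k-1)(t+k)}=\frac1t$ after peeling off the $k=0$ term. The only (immaterial) difference is that for the lower bound the paper compares the sum to $\int_0^\infty (t+s)^{-2}\,ds=1/t$, whereas you telescope $\frac{1}{(t+k)(t+k+1)}$; as you note yourself, these are the same computation.
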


\begin{proof}
The latter estimate in \eqref{eq:trigamma_upper_simple} is clear. For the former, we have from \eqref{eq:trigamma_series} that
\[
\psi_1(t)<\frac{1}{t^2}+\sum_{m=1}^\infty \frac{1}{(t+m)(t+m-1)}
= \frac{1}{t^2}+\frac{1}{t}
=
\frac{t+1}{t^2}
.
\]
For \eqref{eq:trigamma_lower}, we have 
\[
\psi_1(t) = \sum_{k=0}^\infty \frac{1}{(t+k)^2}
\;>\; \int_0^\infty \frac{ds}{(t+s)^2}
\;=\; \frac{1}{t}.\qedhere
\]
\end{proof}

Next we recall the Gamma distribution: for a shape parameter $\alpha>0$ and scale parameter $\theta>0$, we write $V\sim \mathrm{Gamma}(\alpha,\theta)$ if $V$ has density
\begin{equation}
\label{eq:gamma-density}
f(v)=\frac{1}{\Gamma(\alpha)\theta^{\alpha}} v^{\alpha-1}e^{-v/\theta}, \qquad v>0.
\end{equation}
An important special case is the chi-squared distribution $\chi^2_\nu = \mathrm{Gamma}(\nu/2,2)$ for $\nu>0$.
Recall that $\chi^2_\nu$ is the law of $Z_1^2+\dots+Z_{\nu}^2$ for IID standard Gaussian $Z_i$ when $\nu\in\mathbb N$.

\begin{lemma}[{\cite[Equation (17.29)]{JohnsonKotzBalakrishnan1994}}]
\label{lem:ElogGamma}
If $V\sim \mathrm{Gamma}(\alpha,\theta)$, then
\[
\mathbb E[\log V] = \psi(\alpha) + \log\theta.
\]
In particular, if $U\sim\chi^2_\nu=\mathrm{Gamma}(\nu/2,2)$, then
\[
\mathbb E[\log U] = \psi\Bigl(\frac{\nu}{2}\Bigr) + \log 2.
\]
\end{lemma}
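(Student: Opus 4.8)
The plan is to obtain $\mathbb E[\log V]$ by differentiating the Gamma normalization identity with respect to the shape parameter. Since the density in \eqref{eq:gamma-density} integrates to $1$, we have $\Gamma(\alpha)\,\theta^{\alpha}=\int_0^\infty v^{\alpha-1}e^{-v/\theta}\,dv$ for all $\alpha,\theta>0$. I would differentiate both sides in $\alpha$ with $\theta$ held fixed: the left side yields $\Gamma'(\alpha)\theta^\alpha+\Gamma(\alpha)\theta^\alpha\log\theta$, and (granting the interchange of $\partial_\alpha$ with the integral) the right side yields $\int_0^\infty v^{\alpha-1}(\log v)\,e^{-v/\theta}\,dv$. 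Dividing through by $\Gamma(\alpha)\theta^\alpha$ turns the right side into $\mathbb E[\log V]$ and the left side into $\Gamma'(\alpha)/\Gamma(\alpha)+\log\theta=\psi(\alpha)+\log\theta$, which is the claim.

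An equivalent route, perhaps cleaner to present, is via the Mellin transform: the substitution $u=v/\theta$ in \eqref{eq:gamma-density} gives $\mathbb E[V^s]=\theta^s\,\Gamma(\alpha+s)/\Gamma(\alpha)$ for every $s>-\alpha$, and differentiating this identity at $s=0$ produces $\mathbb E[\log V]=\log\theta+\Gamma'(\alpha)/\Gamma(\alpha)=\log\theta+\psi(\alpha)$. The chi-squared statement is then just the special case $\alpha=\nu/2$, $\theta=2$, using the convention $\chi^2_\nu=\mathrm{Gamma}(\nu/2,2)$ recorded above.

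The only point requiring care is justifying the interchange of differentiation and integration (equivalently, of $\frac{d}{ds}\mathbb E[V^s]$ with $\mathbb E\!\left[\frac{d}{ds}V^s\right]$). For this I would fix a compact neighborhood of $0$, say $s\in[-\alpha/2,1]$, and exhibit an integrable dominating function: on $v\in(0,1]$ one has $|v^s\log v|\le v^{-\alpha/2}|\log v|$, so the integrand is bounded by $v^{\alpha/2-1}|\log v|$ near $0$, which is integrable since $\alpha/2>0$; on $v\in[1,\infty)$ one has $|v^s\log v|\le v\log v$, which is crushed by the factor $e^{-v/\theta}$. With such a dominating function in hand, the standard theorem on differentiation under the integral sign applies on the chosen neighborhood, and everything else is a direct computation — so this domination step is the main (and only) obstacle.
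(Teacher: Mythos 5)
Your proposal is correct and complete. The paper does not prove this lemma at all --- it is stated with a citation to a standard reference --- so there is no in-paper argument to compare against; your derivation (differentiating the normalization identity $\Gamma(\alpha)\theta^{\alpha}=\int_0^\infty v^{\alpha-1}e^{-v/\theta}\,dv$ in $\alpha$, or equivalently differentiating the Mellin transform $\mathbb E[V^s]=\theta^s\Gamma(\alpha+s)/\Gamma(\alpha)$ at $s=0$) is the standard one, and your dominating function on $s\in[-\alpha/2,1]$ correctly handles the only technical point, the interchange of derivative and integral. The chi-squared specialization $\alpha=\nu/2$, $\theta=2$ is immediate.
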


\begin{lemma}[Reciprocal moment of a Gamma random variable]
\label{lem:gamma_inverse}
If $V\sim \mathrm{Gamma}(\alpha,\theta)$ and $\alpha>1$, then
\[
\mathbb E[V^{-1}] = \frac{1}{\theta(\alpha-1)}.
\]
If $\alpha\le 1$, then $\mathbb E[V^{-1}]=+\infty$.
In particular, if $U\sim\chi^2_\nu$ with $\nu>2$, then
\[
\mathbb E[U^{-1}] = \frac{1}{\nu-2}
\]
and $\mathbb E[U^{-1}]=\infty$ if $\nu\leq 2$.
\end{lemma}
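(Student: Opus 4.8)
The plan is to evaluate the reciprocal moment by direct integration against the density \eqref{eq:gamma-density}, reducing it to a Gamma integral with a shifted shape parameter. Concretely, I would write
\[
\mathbb E[V^{-1}] = \int_0^\infty v^{-1}\,\frac{v^{\alpha-1}e^{-v/\theta}}{\Gamma(\alpha)\theta^\alpha}\,dv
= \frac{1}{\Gamma(\alpha)\theta^\alpha}\int_0^\infty v^{(\alpha-1)-1}e^{-v/\theta}\,dv,
\]
and then, assuming $\alpha>1$ so that the integrand is integrable at the origin, recognize the remaining integral as $\Gamma(\alpha-1)\,\theta^{\alpha-1}$ (for instance via the substitution $u=v/\theta$ and the definition of $\Gamma$). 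This gives $\mathbb E[V^{-1}] = \frac{\Gamma(\alpha-1)}{\Gamma(\alpha)\,\theta}$, and the functional equation $\Gamma(\alpha)=(\alpha-1)\Gamma(\alpha-1)$ — valid since $\alpha-1>0$ — yields the claimed value $\frac{1}{\theta(\alpha-1)}$.

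For the divergence statement when $\alpha\le 1$, I would argue directly from nonnegativity of the integrand. As $v\to 0^+$ we have $v^{-1}f(v)\sim \frac{1}{\Gamma(\alpha)\theta^\alpha}\,v^{\alpha-2}$, and $\int_0^1 v^{\alpha-2}\,dv=+\infty$ precisely when $\alpha-2\le -1$, i.e. $\alpha\le 1$ (the boundary case $\alpha=1$ producing the divergent integral $\int_0^1 v^{-1}\,dv$). Since $v^{-1}f(v)\ge 0$ everywhere on $(0,\infty)$, monotone convergence forces $\mathbb E[V^{-1}]=+\infty$.

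The chi-squared consequences are the specialization $\alpha=\nu/2$, $\theta=2$: for $\nu>2$ one gets $\mathbb E[U^{-1}]=\frac{1}{2(\nu/2-1)}=\frac{1}{\nu-2}$, while $\nu\le 2$ corresponds to $\alpha=\nu/2\le 1$ and hence to the divergent case. I do not expect any genuine obstacle here; the only points requiring care are that the shape-parameter shift is legitimate only once integrability near $0$ has been established (so the regimes $\alpha>1$ and $\alpha\le 1$ must be separated from the start), and that the Gamma functional equation is invoked only with a strictly positive argument.
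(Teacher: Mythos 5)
Your proposal is correct and follows essentially the same route as the paper: direct integration against the Gamma density, recognition of the shifted-shape Gamma integral $\Gamma(\alpha-1)\theta^{\alpha-1}$, the functional equation $\Gamma(\alpha)=(\alpha-1)\Gamma(\alpha-1)$, and divergence at the origin when $\alpha\le 1$. The extra care you take in justifying the $\alpha\le 1$ case and the chi-squared specialization is fine but not a departure from the paper's argument.
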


\begin{proof}
Direct integration gives, for $\alpha>1$,
\begin{align*}
\mathbb E[V^{-1}]
=\int_0^\infty s^{-1}\frac{1}{\Gamma(\alpha)\theta^\alpha}s^{\alpha-1}e^{-s/\theta}\,ds
&=\frac{1}{\Gamma(\alpha)\theta^\alpha}\int_0^\infty s^{\alpha-2}e^{-s/\theta}\,ds
\\
&=\frac{\Gamma(\alpha-1)\theta^{\alpha-1}}{\Gamma(\alpha)\theta^\alpha}
=\frac{1}{\theta(\alpha-1)}.
\end{align*}
If $\alpha\le 1$, the integral diverges at $0$.
\end{proof}

\section{Gaussian and Gamma estimation under forward KL}
\label{sec:forward_gaussian}

\subsection{Coordinate Change Reductions for Gaussians}

Here we reduce both the known and unknown mean cases of Gaussian estimation to $P_*=\mathcal N(0,I_d)$ by coordinate change.

\begin{lemma}[Coordinate change for known mean]
\label{lem:whitening}
Let $Y_i := \Sigma_\ast^{-1/2}X_i$, so $Y_i\sim\mathcal N(0,I_d)$, and define
\[
\widehat S_n:=\frac1n\sum_{i=1}^n Y_iY_i^\top.
\]
Then $\widehat\Sigma_n=\frac{1}{n}\sum_{i=1}^n X_i X_i^{\top}=\Sigma_\ast^{1/2}\widehat S_n\Sigma_\ast^{1/2}$ and
\[
D_{\mathrm{KL}}\!\big(\mathcal N(0,\Sigma_\ast)\,\|\,\mathcal N(0,\widehat\Sigma_n)\big)
=
\frac12\Bigl(\log\det\widehat S_n+\tr(\widehat S_n^{-1})-d\Bigr)
\]
and so
\begin{equation}
\label{eq:En_d_in_terms_of_S}
\mathcal E_{n,d}(\Sigma_\ast)
=\frac12\Bigl(\mathbb E[\log\det\widehat S_n]+\mathbb E[\tr(\widehat S_n^{-1})]-d\Bigr).
\end{equation}
\end{lemma}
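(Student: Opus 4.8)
The plan is to reduce everything to the standard closed form for the KL divergence between centered nondegenerate Gaussians,
\[
D_{\mathrm{KL}}\!\big(\mathcal N(0,A)\,\|\,\mathcal N(0,B)\big)
= \frac12\Bigl(\tr(B^{-1}A) - d + \log\frac{\det B}{\det A}\Bigr),\qquad A,B\in\mathbb S^d_{++},
\]
and then to exploit the conjugation structure $\widehat\Sigma_n=\Sigma_\ast^{1/2}\widehat S_n\Sigma_\ast^{1/2}$ to cancel all dependence on $\Sigma_\ast$. First I would dispatch the elementary claims. Since $\Sigma_\ast^{-1/2}$ is a fixed invertible linear map and the $X_i\sim\mathcal N(0,\Sigma_\ast)$ are independent, the images $Y_i=\Sigma_\ast^{-1/2}X_i$ are independent and centered Gaussian with covariance $\Sigma_\ast^{-1/2}\Sigma_\ast\Sigma_\ast^{-1/2}=I_d$; this is just affine invariance of the Gaussian family. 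Writing $X_i=\Sigma_\ast^{1/2}Y_i$ gives $X_iX_i^\top=\Sigma_\ast^{1/2}(Y_iY_i^\top)\Sigma_\ast^{1/2}$, and averaging over $i=1,\dots,n$ yields $\widehat\Sigma_n=\Sigma_\ast^{1/2}\widehat S_n\Sigma_\ast^{1/2}$.

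Next, on the event $\{\widehat S_n\succ 0\}$ — which has probability one as soon as $n\ge d$, since $n\widehat S_n$ is Wishart$(n,I_d)$ — I would substitute $A=\Sigma_\ast$ and $B=\widehat\Sigma_n=\Sigma_\ast^{1/2}\widehat S_n\Sigma_\ast^{1/2}$ into the displayed formula. For the trace term, $B^{-1}A=\Sigma_\ast^{-1/2}\widehat S_n^{-1}\Sigma_\ast^{-1/2}\Sigma_\ast=\Sigma_\ast^{-1/2}\widehat S_n^{-1}\Sigma_\ast^{1/2}$, so cyclicity of the trace gives $\tr(B^{-1}A)=\tr(\widehat S_n^{-1})$. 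For the log-determinant term, multiplicativity of $\det$ gives $\det B=\det(\Sigma_\ast)\det(\widehat S_n)$, hence $\det B/\det A=\det\widehat S_n$. This yields the pointwise identity $D_{\mathrm{KL}}(\mathcal N(0,\Sigma_\ast)\|\mathcal N(0,\widehat\Sigma_n))=\frac12(\log\det\widehat S_n+\tr(\widehat S_n^{-1})-d)$, and \eqref{eq:En_d_in_terms_of_S} then follows by taking expectations. The expectation of the sum splits into the sum of expectations because $\log\det\widehat S_n$ is integrable — by the Bartlett decomposition it has the law of $\sum_{j=1}^d\log\chi^2_{n-j+1}-d\log n$, and each $\mathbb E[\log\chi^2_{n-j+1}]$ is finite for $n\ge d$ — while $\tr(\widehat S_n^{-1})\ge 0$, so no cancellation of $\pm\infty$ can occur.

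There is essentially no real obstacle here; the only point needing a word is the degenerate regime $n<d$, where $\widehat S_n$ is almost surely singular, $\mathcal N(0,\widehat\Sigma_n)$ concentrates on a proper subspace to which $\mathcal N(0,\Sigma_\ast)$ is not absolutely continuous, and hence $D_{\mathrm{KL}}=+\infty$ almost surely and $\mathcal E_{n,d}(\Sigma_\ast)=+\infty$; in this case one simply reads the right-hand side of \eqref{eq:En_d_in_terms_of_S} as $+\infty$. All of the genuinely quantitative work — evaluating $\mathbb E[\log\det\widehat S_n]$ and $\mathbb E[\tr(\widehat S_n^{-1})]$ in closed form via Lemmas~\ref{lem:ElogGamma} and~\ref{lem:gamma_inverse} — is deferred to the subsequent computation of $\mathcal E_{n,d}$.
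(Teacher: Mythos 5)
Your proposal is correct and follows essentially the same route as the paper: plug $\widehat\Sigma_n=\Sigma_\ast^{1/2}\widehat S_n\Sigma_\ast^{1/2}$ into the standard centered-Gaussian KL formula, use multiplicativity of $\det$ and cyclicity of the trace to cancel $\Sigma_\ast$, and take expectations. The extra remarks on integrability and the singular regime $n<d$ are fine but not needed for the lemma as stated.
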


\begin{proof}
The covariance identity is immediate from $X_i=\Sigma_\ast^{1/2}Y_i$.
Moreover $\log\det\widehat\Sigma_n=\log\det\Sigma_\ast+\log\det\widehat S_n$.
For the trace term,
\[
\tr(\widehat\Sigma_n^{-1}\Sigma_\ast)
=\tr\big((\Sigma_\ast^{1/2}\widehat S_n\Sigma_\ast^{1/2})^{-1}\Sigma_\ast\big)
=\tr\big(\Sigma_\ast^{-1/2}\widehat S_n^{-1}\Sigma_\ast^{-1/2}\Sigma_\ast\big)
=\tr(\widehat S_n^{-1}),
\]
by cyclicity of trace. Substituting into $D_{\mathrm{KL}}(\mathcal N(0,\Sigma_\ast)\|\mathcal N(0,\widehat\Sigma_n))$ and taking expectations yields \eqref{eq:En_d_in_terms_of_S}.
\end{proof}

We omit the very similar proof for the unknown mean case, stated below.

\begin{lemma}[Coordinate change for the full model]
\label{lem:whitening_full}
Let $Y_i := \Sigma_\ast^{-1/2}(X_i-\mu_\ast)$ and $\bar Y:=\frac1n\sum_{i=1}^n Y_i$.
We now let
\[
\widehat S_n := \frac1n\sum_{i=1}^n (Y_i-\bar Y)(Y_i-\bar Y)^\top.
\]
Then $Y_i\sim\mathcal N(0,I_d)$ and the MLE is given by
\[
\widehat\mu_n-\mu_\ast=\Sigma_\ast^{1/2}\bar Y,
\qquad
\widehat\Sigma_n=\Sigma_\ast^{1/2}\widehat S_n\Sigma_\ast^{1/2}.
\]
Moreover, 
\[
D_{\mathrm{KL}}\!\big(\mathcal N(\mu_\ast,\Sigma_\ast)\,\|\,\mathcal N(\widehat\mu_n,\widehat\Sigma_n)\big)
=
\frac12\Bigl(\log\det\widehat S_n+\tr(\widehat S_n^{-1})+\bar Y^\top\widehat S_n^{-1}\bar Y-d\Bigr)
\]
and so
\begin{equation}
\label{eq:full_risk_whitened}
\mathcal E^{\mathrm{full}}_{n,d}(\mu_\ast,\Sigma_\ast)
=
\frac12
\Bigl(\mathbb E[\log\det\widehat S_n]+\mathbb E[\tr(\widehat S_n^{-1})]
+\mathbb E[\bar Y^\top\widehat S_n^{-1}\bar Y]
-d\Bigr).
\end{equation}
\end{lemma}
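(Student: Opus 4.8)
The plan is to follow the template of Lemma~\ref{lem:whitening}, now carrying along the extra mean term. First I would verify that $Y_i:=\Sigma_\ast^{-1/2}(X_i-\mu_\ast)$ is standard Gaussian: it is an affine image of $X_i$ with mean $\Sigma_\ast^{-1/2}(\mu_\ast-\mu_\ast)=0$ and covariance $\Sigma_\ast^{-1/2}\Sigma_\ast\Sigma_\ast^{-1/2}=I_d$, and independence across $i$ is preserved. Averaging gives $\bar Y=\Sigma_\ast^{-1/2}(\bar X-\mu_\ast)$, hence $\widehat\mu_n-\mu_\ast=\bar X-\mu_\ast=\Sigma_\ast^{1/2}\bar Y$; and since $X_i-\bar X=\Sigma_\ast^{1/2}(Y_i-\bar Y)$, factoring $\Sigma_\ast^{1/2}$ out of each summand of $\widehat\Sigma_n$ yields $\widehat\Sigma_n=\Sigma_\ast^{1/2}\widehat S_n\Sigma_\ast^{1/2}$.

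Next I would substitute these into the standard closed form for the KL divergence between nondegenerate Gaussians, $D_{\mathrm{KL}}(\mathcal N(\mu_0,\Sigma_0)\,\|\,\mathcal N(\mu_1,\Sigma_1))=\tfrac{1}{2}\bigl(\tr(\Sigma_1^{-1}\Sigma_0)+(\mu_1-\mu_0)^\top\Sigma_1^{-1}(\mu_1-\mu_0)-d+\log(\det\Sigma_1/\det\Sigma_0)\bigr)$, evaluated at $(\mu_0,\Sigma_0)=(\mu_\ast,\Sigma_\ast)$ and $(\mu_1,\Sigma_1)=(\widehat\mu_n,\widehat\Sigma_n)$. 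The log-determinant term telescopes, $\log\det\widehat\Sigma_n-\log\det\Sigma_\ast=\log\det\widehat S_n$; the trace term collapses exactly as in Lemma~\ref{lem:whitening} by cyclicity, $\tr(\widehat\Sigma_n^{-1}\Sigma_\ast)=\tr(\Sigma_\ast^{-1/2}\widehat S_n^{-1}\Sigma_\ast^{-1/2}\Sigma_\ast)=\tr(\widehat S_n^{-1})$; and the only genuinely new piece, the quadratic mean term, simplifies the same way via $\widehat\mu_n-\mu_\ast=\Sigma_\ast^{1/2}\bar Y$, namely $(\widehat\mu_n-\mu_\ast)^\top\widehat\Sigma_n^{-1}(\widehat\mu_n-\mu_\ast)=\bar Y^\top\Sigma_\ast^{1/2}(\Sigma_\ast^{1/2}\widehat S_n\Sigma_\ast^{1/2})^{-1}\Sigma_\ast^{1/2}\bar Y=\bar Y^\top\widehat S_n^{-1}\bar Y$. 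Assembling the three pieces gives the claimed pointwise identity, and taking expectation over $X_{1:n}$ yields \eqref{eq:full_risk_whitened}.

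Since everything reduces to elementary linear algebra together with the known Gaussian KL formula, I do not expect a genuine obstacle here; this is precisely the ``very similar proof'' that is omitted. The one point worth a word of care is that the displayed identities implicitly require $\widehat S_n$ (equivalently $\widehat\Sigma_n$) to be invertible, which holds almost surely once $n\ge d+1$, since then the $n-1$ centered vectors $Y_i-\bar Y$ a.s.\ span $\mathbb R^d$; the complementary regime is exactly where the risk is $+\infty$ and is treated separately in Theorem~\ref{thm:full_main}. One should also observe that the additive constant $d$ in the KL formula is untouched by the coordinate change, which is the structural reason $\mathcal E^{\mathrm{full}}_{n,d}$ carries no dependence on $(\mu_\ast,\Sigma_\ast)$.
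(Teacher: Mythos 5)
Your proposal is correct and is exactly the argument the paper intends: the paper omits this proof as ``very similar'' to Lemma~\ref{lem:whitening}, and your write-up is the straightforward affine-whitening computation (log-determinant telescoping, trace cyclicity, and the quadratic mean term collapsing via $\widehat\mu_n-\mu_\ast=\Sigma_\ast^{1/2}\bar Y$) that fills in that omission. Your side remark on almost-sure invertibility of $\widehat S_n$ for $n\ge d+1$ is a reasonable point of care that the paper handles implicitly via the finiteness threshold in Theorem~\ref{thm:full_main}.
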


\subsection{Wishart Matrices}

Let $Y_1,\dots,Y_n\sim\mathcal N(0,I_d)$ IID and define the Wishart matrix
\[
W := \sum_{i=1}^n Y_iY_i^\top \sim \mathrm{Wishart}_d(n,I_d),
\qquad
\widehat S_n = \frac1n W.
\]
We rely on the following well-known consequence of Gram--Schmidt orthogonalization.

\begin{lemma}[{\cite[Lemma 7.2.1]{Anderson2003}}]
\label{lem:Bartlett}
Let $W\sim\mathrm{Wishart}_d(n,I_d)$ with $n\ge d$.
Then
\[
\det W \;\stackrel{d}{=}\; \prod_{j=1}^d \chi^2_{\,n-j+1},
\]
where the chi-squared variables on the right-hand side are independent.
\end{lemma}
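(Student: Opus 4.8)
The plan is to realize $W$ through the $n\times d$ data matrix $Y$ whose $i$-th row is $Y_i^\top$, so that $W=Y^\top Y$ and the columns $C_1,\dots,C_d\in\mathbb R^n$ of $Y$ are IID $\mathcal N(0,I_n)$. Running Gram--Schmidt on $C_1,\dots,C_d$ produces orthogonal residuals $V_1,\dots,V_d$, where $V_1=C_1$ and $V_j$ is the component of $C_j$ orthogonal to $\mathrm{span}(C_1,\dots,C_{j-1})$. Setting $u_j=V_j/\|V_j\|$ gives an orthonormal system, and an upper-triangular matrix $R$ with $R_{jj}=\|V_j\|$ and $R_{kj}=\langle C_j,u_k\rangle$ for $k<j$, such that $Y=QR$ with $Q=[\,u_1\ \cdots\ u_d\,]$ and $Q^\top Q=I_d$. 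Hence $W=Y^\top Y=R^\top R$, so
\[
\det W=(\det R)^2=\prod_{j=1}^d\|V_j\|^2 .
\]
All of this is valid almost surely, since $C_1,\dots,C_d$ are a.s.\ linearly independent when $n\ge d$, so every $V_j$ is a.s.\ nonzero. This reduces the claim to identifying the joint law of $(\|V_1\|^2,\dots,\|V_d\|^2)$.

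The only genuine probabilistic content is then to show these are independent with $\|V_j\|^2\sim\chi^2_{n-j+1}$. Condition on $C_1,\dots,C_{j-1}$: almost surely these span a subspace $H_{j-1}$ of dimension $j-1$, and $\|V_j\|^2=\|P_{H_{j-1}^\perp}C_j\|^2$, the squared norm of the projection of $C_j$ onto the $(n-j+1)$-dimensional complement. Since $C_j\sim\mathcal N(0,I_n)$ is independent of $C_1,\dots,C_{j-1}$ and has a rotation-invariant law, conditionally on $C_1,\dots,C_{j-1}$ the coordinates of $C_j$ in any orthonormal basis adapted to the splitting $H_{j-1}\oplus H_{j-1}^\perp$ are IID $\mathcal N(0,1)$; in particular $\|P_{H_{j-1}^\perp}C_j\|^2\sim\chi^2_{n-j+1}$, and because this conditional law does not depend on $C_1,\dots,C_{j-1}$, the variable $\|V_j\|^2$ is independent of $(C_1,\dots,C_{j-1})$ and a fortiori of $\|V_1\|^2,\dots,\|V_{j-1}\|^2$. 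Iterating over $j=1,\dots,d$ gives mutual independence with the stated chi-squared laws, and substituting into the display above finishes the proof.

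The main obstacle is the conditional-independence bookkeeping in the second step: one must treat $\|V_j\|^2$ as a measurable function of $C_j$ together with the random-but-independent subspace $H_{j-1}^\perp$, and invoke rotational invariance of $\mathcal N(0,I_n)$ to conclude that its conditional law is $\chi^2_{n-j+1}$ irrespective of the conditioning. Everything else is deterministic linear algebra (the QR/Cholesky identity $W=R^\top R$ and $\det W=\prod_j R_{jj}^2$). Equivalently, one may package the computation as the classical Bartlett decomposition $W=LL^\top$ with $L$ lower-triangular having independent entries, $L_{jj}^2\sim\chi^2_{n-j+1}$ and $L_{kj}\sim\mathcal N(0,1)$ for $k>j$, and then take determinants; the argument is the same.
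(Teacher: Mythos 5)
Your proof is correct and is exactly the Gram--Schmidt/QR argument that the paper alludes to (it states the lemma as ``a well-known consequence of Gram--Schmidt orthogonalization'' and outsources the details to \cite[Lemma 7.2.1]{Anderson2003}); your conditional-projection step is also the same device the paper uses explicitly in its proof of Lemma~\ref{lem:schur}. The key points --- $\det W=\prod_j\|V_j\|^2$ via $W=R^\top R$, the conditional law $\chi^2_{n-j+1}$ of the residual norm by rotation invariance, and the sequential independence bookkeeping --- are all handled properly, so nothing is missing.
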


\begin{proposition}
\label{prop:ElogdetW}
Let $W\sim\mathrm{Wishart}_d(n,I_d)$ with $n\ge d$.
Then
\[
\mathbb E[\log\det W]
= d\log 2 + \sum_{j=1}^d \psi\Bigl(\frac{n-j+1}{2}\Bigr).
\]
\end{proposition}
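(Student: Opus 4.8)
The plan is to combine Lemma~\ref{lem:Bartlett} (Bartlett's decomposition) with Lemma~\ref{lem:ElogGamma} (the log-moment of a Gamma variable). First I would apply Lemma~\ref{lem:Bartlett} to write $\det W \stackrel{d}{=} \prod_{j=1}^d \chi^2_{n-j+1}$ with the factors independent. Taking logarithms turns the product into a sum: $\log\det W \stackrel{d}{=} \sum_{j=1}^d \log\chi^2_{n-j+1}$. Since equality in distribution preserves expectations, $\mathbb E[\log\det W] = \sum_{j=1}^d \mathbb E[\log\chi^2_{n-j+1}]$.

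Next I would evaluate each term. By Lemma~\ref{lem:ElogGamma}, applied with $U\sim\chi^2_{n-j+1}=\mathrm{Gamma}((n-j+1)/2,\,2)$, we get $\mathbb E[\log\chi^2_{n-j+1}] = \psi\bigl(\tfrac{n-j+1}{2}\bigr) + \log 2$. Summing over $j=1,\dots,d$ gives
\[
\mathbb E[\log\det W] = \sum_{j=1}^d \Bigl(\psi\bigl(\tfrac{n-j+1}{2}\bigr) + \log 2\Bigr)
= d\log 2 + \sum_{j=1}^d \psi\Bigl(\tfrac{n-j+1}{2}\Bigr),
\]
which is the claimed formula.

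The only point requiring any care is integrability: one should check that $\mathbb E[\log\det W]$ is finite (so that the rearrangement into a finite sum of finite expectations is legitimate). Since $n\ge d$, each index $n-j+1 \ge 1$ for $j=1,\dots,d$, so each $\chi^2_{n-j+1}$ has strictly positive shape parameter and $\mathbb E[|\log\chi^2_{n-j+1}|]<\infty$ — the density near $0$ behaves like $v^{(n-j+1)/2-1}$, which is integrable against $|\log v|$, and the upper tail is controlled by the exponential factor. Thus there is no real obstacle here; the proposition is essentially a direct corollary of the two cited lemmas, and I would expect the write-up to be only a few lines.
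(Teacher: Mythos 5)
Your argument is exactly the paper's proof: apply Lemma~\ref{lem:Bartlett} to write $\log\det W$ as an independent sum of $\log\chi^2$ terms, then evaluate each with Lemma~\ref{lem:ElogGamma}. The extra integrability remark is a harmless bonus; the proposal is correct.
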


\begin{proof}
By Lemma~\ref{lem:Bartlett},
$\log\det W \stackrel{d}{=} \sum_{j=1}^d \log \chi^2_{n-j+1}$.
Apply Lemma~\ref{lem:ElogGamma} termwise.
\end{proof}

\begin{lemma}
\label{lem:schur}
Let $W\sim\mathrm{Wishart}_d(n,I_d)$ with $n\ge d$.
Then
\[
(W^{-1})_{11} \;\stackrel{d}{=}\; \frac{1}{\chi^2_{\,n-d+1}}.
\]
\end{lemma}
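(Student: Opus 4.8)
The plan is to realize $W=Z^\top Z$ for an $n\times d$ matrix $Z$ whose rows are the $Y_i^\top$ (so $Z$ has IID $\mathcal N(0,1)$ entries), and to compute $(W^{-1})_{11}$ via a Schur complement. Write $Z=[\,z\mid Z_2\,]$, where $z\in\mathbb R^n$ is the first column and $Z_2\in\mathbb R^{n\times(d-1)}$ collects the remaining columns, and partition $W$ conformally:
\[
W=\begin{pmatrix}\|z\|^2 & z^\top Z_2\\ Z_2^\top z & Z_2^\top Z_2\end{pmatrix}.
\]
Since $n\ge d$, both $W$ and $Z_2^\top Z_2$ are almost surely invertible (the Gaussian law is absolutely continuous, and $Z_2$ has full column rank $d-1$ a.s.\ since $n\ge d-1$). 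The block-inverse formula then gives
\[
(W^{-1})_{11}=\bigl(\|z\|^2-z^\top Z_2(Z_2^\top Z_2)^{-1}Z_2^\top z\bigr)^{-1}=\bigl(z^\top(I_n-P)z\bigr)^{-1},
\]
where $P:=Z_2(Z_2^\top Z_2)^{-1}Z_2^\top$ is the orthogonal projection onto the column space of $Z_2$.

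Next I would condition on $Z_2$. On the full-rank event, $I_n-P$ is an orthogonal projection of rank $r:=n-d+1$, so we may write $I_n-P=UU^\top$ with $U\in\mathbb R^{n\times r}$ having orthonormal columns, where $U$ is a measurable function of $Z_2$. The first column $z$ is independent of $Z_2$ with $z\sim\mathcal N(0,I_n)$, so conditionally on $Z_2$ we get $U^\top z\sim\mathcal N(0,U^\top U)=\mathcal N(0,I_r)$, and hence $z^\top(I_n-P)z=\|U^\top z\|^2\sim\chi^2_r$. Because this conditional law does not depend on $Z_2$, it is also the unconditional law of $z^\top(I_n-P)z$, which gives $z^\top(I_n-P)z\stackrel{d}{=}\chi^2_{\,n-d+1}$ and therefore $(W^{-1})_{11}\stackrel{d}{=}1/\chi^2_{\,n-d+1}$, as claimed.

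There is no real obstacle here; the only points requiring a little care are the almost-sure invertibility and full-rank statements (both immediate from $n\ge d$ and absolute continuity of the Gaussian density) and the bookkeeping that the Schur complement of $W_{22}$ in $W$ equals the quadratic form $z^\top(I_n-P)z$. One could alternatively just cite this as a standard fact—it is essentially the content underlying the Gram--Schmidt analysis of \cite[Section 7.2]{Anderson2003}—but the short self-contained argument above is equally quick.
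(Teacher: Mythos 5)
Your argument is correct and is essentially identical to the paper's proof: both write $W=Z^\top Z$, isolate the first column, apply the block-inverse/Schur-complement formula, and identify $z^\top(I_n-P)z$ as a $\chi^2_{n-d+1}$ quadratic form by conditioning on $Z_2$ and using that $I_n-P$ is a rank-$(n-d+1)$ projection independent of $z$. Your write-up merely spells out the conditioning step slightly more explicitly than the paper, which also cites the standard reference for this fact.
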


\begin{proof}
Let $Z$ be the $n\times d$ data matrix with rows $Y_1^\top,\dots,Y_n^\top$, so that $W=Z^\top Z$.
Write
\[
Z = [z_1\ Z_2],
\]
where $z_1\in\mathbb R^n$ is the first column and $Z_2\in\mathbb R^{n\times(d-1)}$ contains the remaining columns.
Then
\[
W = Z^\top Z
= \begin{pmatrix}
z_1^\top z_1 & z_1^\top Z_2\\
Z_2^\top z_1 & Z_2^\top Z_2
\end{pmatrix}
= \begin{pmatrix}
w_{11} & w_{12}^\top\\
w_{12} & W_{22}
\end{pmatrix}.
\]
The formula for the inverse of a block matrix is
\[
(W^{-1})_{11}
= \frac{1}{w_{11} - w_{12}^\top W_{22}^{-1}w_{12}}.
\]
It is well-known (see e.g.\ \cite[Theorem 3.2.10]{Muirhead1982}) that the denominator is a chi-squared random variable with the claimed number of degrees of freedom, as desired.
Explicitly, we may write
\[
w_{11}-w_{12}^\top W_{22}^{-1}w_{12}
= z_1^\top\left(I_n-Z_2(Z_2^\top Z_2)^{-1}Z_2^\top\right)z_1.
\]
The matrix 
\[
Q=I_n-Z_2(Z_2^\top Z_2)^{-1}Z_2^\top
\]
is the orthogonal projector onto $\mathrm{col}(Z_2)^\perp$ and has rank $n-d+1$ almost surely. Since $z_1\sim \mathcal N(0,I_n)$ and is independent of $Z_2$, it follows that $z_1^\top Q z_1 \sim \chi^2_{n-d+1}$ is equal in distribution to the sum of squares of $n-d+1$ IID standard Gaussians, as claimed.
\end{proof}

\begin{proposition}
\label{prop:EWinv}
Let $W\sim\mathrm{Wishart}_d(n,I_d)$ with $n\ge d$.
If $n\leq d+1$ then $\mathbb E[(W^{-1})_{11}]=\mathbb E[\tr(W^{-1})]=\infty$.
If $n>d+1$ then
\[
\mathbb E[W^{-1}]
=
\frac{I_d}{n-d-1},\qquad 
\mathbb E[\tr(W^{-1})]=\frac{d}{n-d-1}.
\]
\end{proposition}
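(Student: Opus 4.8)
The plan is to reduce everything to the reciprocal moment of a chi-squared variable, using Lemma~\ref{lem:schur} together with two symmetries of the standard Wishart law.

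First I would exploit coordinate exchangeability. Since the rows $Y_i\sim\mathcal N(0,I_d)$ have exchangeable coordinates, the law of $W=\sum_i Y_iY_i^\top$ is invariant under simultaneous permutation of its rows and columns, hence so is the law of $W^{-1}$. In particular every diagonal entry $(W^{-1})_{jj}$ has the same distribution as $(W^{-1})_{11}$, which by Lemma~\ref{lem:schur} equals $1/\chi^2_{n-d+1}$ in law. Applying Lemma~\ref{lem:gamma_inverse} with $\nu=n-d+1$ (note $\nu\ge 1$ since $n\ge d$): when $n>d+1$ we have $\nu>2$, so $\mathbb E[(W^{-1})_{jj}]=1/(\nu-2)=1/(n-d-1)$; when $n\le d+1$ we have $\nu\le 2$, so this expectation is $+\infty$. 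Summing over $j=1,\dots,d$ gives the stated value of $\mathbb E[\tr(W^{-1})]$ in both regimes, the divergent case holding because a finite sum of nonnegative terms, all with infinite expectation, has infinite expectation.

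It remains, in the case $n>d+1$, to pin down the off-diagonal entries of $\mathbb E[W^{-1}]$. First note this expectation is well-defined entrywise: $W^{-1}$ is positive definite, so by Cauchy--Schwarz (or AM--GM) $|(W^{-1})_{ij}|\le \tfrac12\big((W^{-1})_{ii}+(W^{-1})_{jj}\big)$, which is integrable by the previous paragraph. Now I would use rotational invariance: for any orthogonal $O$, the vectors $OY_i$ are again IID $\mathcal N(0,I_d)$, so $W\stackrel{d}{=}OWO^\top$ and hence $W^{-1}\stackrel{d}{=}OW^{-1}O^\top$. Taking expectations, $\mathbb E[W^{-1}]=O\,\mathbb E[W^{-1}]\,O^\top$ for every orthogonal $O$, which forces $\mathbb E[W^{-1}]=c\,I_d$ for some scalar $c$. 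Taking the trace and using the trace formula already established, $dc=d/(n-d-1)$, so $c=1/(n-d-1)$, as claimed.

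I do not expect a serious obstacle here; the only points requiring a little care are checking that $\mathbb E[W^{-1}]$ exists before invoking rotational invariance (handled by the Cauchy--Schwarz bound above) and a clean treatment of the boundary cases $n\in\{d,d+1\}$, both of which fall under $\nu\le 2$ in Lemma~\ref{lem:gamma_inverse} and hence yield infinite expectation.
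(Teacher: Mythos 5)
Your proposal is correct and follows essentially the same route as the paper: the Schur-complement identity of Lemma~\ref{lem:schur} plus the reciprocal chi-squared moment of Lemma~\ref{lem:gamma_inverse} for the diagonal entries, positive definiteness to control the off-diagonal entries, and orthogonal invariance to conclude $\mathbb E[W^{-1}]\propto I_d$. The only difference is that you spell out details the paper leaves implicit (the exchangeability of diagonal entries and the explicit bound $|(W^{-1})_{ij}|\le \tfrac12\bigl((W^{-1})_{ii}+(W^{-1})_{jj}\bigr)$), which is fine.
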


\begin{proof}
Lemma~\ref{lem:schur} and Lemma~\ref{lem:gamma_inverse} give
$\mathbb E[(W^{-1})_{11}]=1/(n-d-1)$ for $n>d+1$ and $+\infty$ otherwise.
Positive definiteness implies the off-diagonal entries have finite expectation when the diagonal entries do.
Orthogonal invariance of $W$ implies $\mathbb E[W^{-1}]\propto I_d$ when the expectation exists.
Combining completes the proof.
\end{proof}

\subsection{Known mean: closed form and monotonicity}

\begin{proof}[Proof of Theorem~\ref{thm:main}]
We first show \eqref{eq:main_closed_form}.
This follows by combining Lemma~\ref{lem:whitening} with Proposition~\ref{prop:ElogdetW} and Proposition~\ref{prop:EWinv},
using $\widehat S_n=W/n$, which yields 
\[
\log\det\widehat S_n=\log\det W-d\log n,\quad 
\tr(\widehat S_n^{-1})=n\tr(W^{-1}).
\]
To complete the proof, it suffices to show $f_d'(x)<0$ for all $x>d+1$.
Differentiating yields:
\[
f_d'(x)
=\frac12\sum_{j=1}^d \psi_1\Bigl(\frac{x-j+1}{2}\Bigr)
-\frac{d}{x}
-\frac{d(d+1)}{(x-d-1)^2}.
\]
By \eqref{eq:trigamma_upper_simple}, we have
$\frac12\psi_1((x-j+1)/2) < \frac{1}{x-j-1}$, hence
\begin{align*}
f_d'(x)
< \sum_{j=1}^d \frac{1}{x-j-1} -\frac{d}{x}-\frac{d(d+1)}{(x-d-1)^2}
&\le d\Big(\frac{1}{x-d-1}-\frac{1}{x}-\frac{d+1}{(x-d-1)^2}\Big)
\\
&= \frac{-d(d+1)^2}{x(x-d-1)^2}<0.
\qedhere
\end{align*}
\end{proof}

\subsection{Unknown mean: closed form and monotonicity}

For a Gaussian with unknown mean, recentering around the empirical sample mean is essentially equivalent to removing a single datapoint. We detail this below.

\begin{lemma}
\label{lem:centered_scatter_wishart}
Let $Y_1,\dots,Y_n\sim\mathcal N(0,I_d)$ be IID and $\bar Y=\frac1n\sum_i Y_i$.
Define
\[
W_c:=\sum_{i=1}^n (Y_i-\bar Y)(Y_i-\bar Y)^\top.
\]
Then $W_c\sim\mathrm{Wishart}_d(n-1,I_d)$ and $W_c$ is independent of $\bar Y$.
\end{lemma}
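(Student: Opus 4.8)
The plan is to exhibit an explicit orthogonal transformation that simultaneously reveals the Wishart law of $W_c$ and its independence from $\bar Y$. Concretely, let $U\in\mathbb R^{n\times n}$ be any orthogonal matrix whose last row is the constant vector $\tfrac{1}{\sqrt n}(1,\dots,1)$; such a $U$ exists (for instance by completing that unit vector to an orthonormal basis, as in the Helmert construction). Stack the samples into the data matrix $Z\in\mathbb R^{n\times d}$ with rows $Y_1^\top,\dots,Y_n^\top$, and set $\widetilde Z:=UZ$, with rows $\widetilde Y_1^\top,\dots,\widetilde Y_n^\top$. Since the entries of $Z$ are IID $\mathcal N(0,1)$ and $U$ is orthogonal, the entries of $\widetilde Z$ are again IID $\mathcal N(0,1)$; equivalently the $\widetilde Y_i$ are IID $\mathcal N(0,I_d)$.

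The key algebraic identity is that the mean vector and the centered scatter matrix are, up to scaling, exactly the last and first $n-1$ transformed rows. Indeed, the last row of $\widetilde Z$ is $\tfrac{1}{\sqrt n}\sum_i Y_i^\top=\sqrt n\,\bar Y^\top$, so $\widetilde Y_n=\sqrt n\,\bar Y$. For the scatter matrix, note that the centering operator $I_n-\tfrac1n\mathbf 1\mathbf 1^\top$ is the orthogonal projection onto the hyperplane orthogonal to the last row of $U$, so in the rotated coordinates it becomes the projection onto the first $n-1$ coordinates. Hence
\[
W_c=Z^\top\Bigl(I_n-\tfrac1n\mathbf 1\mathbf 1^\top\Bigr)Z
=\widetilde Z^\top\Bigl(I_n-e_ne_n^\top\Bigr)\widetilde Z
=\sum_{j=1}^{n-1}\widetilde Y_j\widetilde Y_j^\top .
\]
Thus $W_c$ is a sum of $n-1$ IID rank-one terms $\widetilde Y_j\widetilde Y_j^\top$ with $\widetilde Y_j\sim\mathcal N(0,I_d)$, which is precisely the definition of $\mathrm{Wishart}_d(n-1,I_d)$. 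Moreover $W_c$ is a function of $\widetilde Y_1,\dots,\widetilde Y_{n-1}$ only, while $\bar Y=\tfrac1{\sqrt n}\widetilde Y_n$ is a function of $\widetilde Y_n$ alone; since $\widetilde Y_1,\dots,\widetilde Y_n$ are independent, $W_c$ and $\bar Y$ are independent, completing the proof.

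I expect no serious obstacle here: the only point requiring a little care is checking that the centering projector $I_n-\tfrac1n\mathbf 1\mathbf 1^\top$ really does conjugate under $U$ to $I_n-e_ne_n^\top$, which is immediate once one observes that $\tfrac1{\sqrt n}\mathbf 1$ is the last column of $U^\top$, i.e. $U\bigl(\tfrac1{\sqrt n}\mathbf 1\bigr)=e_n$ and hence $U\bigl(\tfrac1n\mathbf 1\mathbf 1^\top\bigr)U^\top=e_ne_n^\top$. This is the classical Helmert-transformation argument underlying Cochran's theorem, so it can be cited or stated briefly rather than belabored.
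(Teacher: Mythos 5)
Your proof is correct and is essentially the same argument as the paper's: both construct an orthogonal matrix containing $\mathbf 1_n/\sqrt n$ as one of its rows (the paper places it first, you place it last), use rotational invariance of the IID Gaussian data matrix, and identify $W_c$ as the Gram matrix of the remaining $n-1$ transformed rows while $\bar Y$ depends only on the distinguished row. No gaps; the conjugation of the centering projector to $I_n-e_ne_n^\top$ is verified exactly as you indicate.
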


\begin{proof}
Let $Z\in\mathbb R^{n\times d}$ be the data matrix with rows $Y_1^\top,\dots,Y_n^\top$.
Let $\mathbf 1_n\in\mathbb R^n$ be the all-ones vector and set $q_1:=\mathbf 1_n/\sqrt n$. Extend $q_1$
to an orthonormal basis $q_1,\dots,q_n$ and let $Q\in\mathbb R^{n\times n}$ be the orthogonal matrix with
rows $q_1^\top,\dots,q_n^\top$. Since $Z$ is Gaussian with IID $\mathcal N(0,1)$ entries, $QZ$ has
the same distribution and has independent rows. Writing
\[
QZ = \begin{pmatrix}\sqrt n\,\bar Y^\top\\ \widetilde Z\end{pmatrix},
\]
the matrix $\widetilde Z\in\mathbb R^{(n-1)\times d}$ has IID $\mathcal N(0,1)$ entries and is independent
of $\bar Y$. Moreover,
\[
W_c
= Z^\top\Bigl(I_n-\frac1n\mathbf 1_n\mathbf 1_n^\top\Bigr)Z
= Z^\top Q^\top \begin{pmatrix}0&0\\0&I_{n-1}\end{pmatrix} Q Z
= \widetilde Z^\top\widetilde Z,
\]
so $W_c\sim\mathrm{Wishart}_d(n-1,I_d)$ and is independent of $\bar Y$.
\end{proof}

\begin{proof}[Proof of Theorem~\ref{thm:full_main}]
We start with \eqref{eq:full_main_closed_form}.
By Lemma~\ref{lem:whitening_full} we may assume $Y_i\sim\mathcal N(0,I_d)$.
Let $W_c$ be as in Lemma~\ref{lem:centered_scatter_wishart}; then $\widehat S_n=W_c/n$.
The log-determinant term is given by Proposition~\ref{prop:ElogdetW} with $n-1$ in place of $n$.
The inverse-trace term is given by Proposition~\ref{prop:EWinv} with the same substitution.
For the mean term, independence gives
\[
\mathbb E[\bar Y^\top \widehat S_n^{-1}\bar Y]
=\tr\!\Big(\mathbb E[\widehat S_n^{-1}]\,\mathbb E[\bar Y\bar Y^\top]\Big)
=\tr\!\Big(\frac{n}{n-d-2}I_d\cdot \frac1n I_d\Big)
=\frac{d}{n-d-2}.
\]
As in the centered case, it remains to show $g_d'(x)<0$ for all $x>d+2$. Differentiating:
\[
g_d'(x)=\frac12\sum_{j=1}^d \psi_1\Bigl(\frac{x-j}{2}\Bigr)
-\frac{d}{x}-\frac{d(d+3)}{(x-d-2)^2}.
\]
By \eqref{eq:trigamma_upper_simple}, $\frac12\psi_1((x-j)/2)<\frac{1}{x-j-2}$, hence
\begin{align*}
g_d'(x)
< \Big(\sum_{j=1}^d \frac{1}{x-j-2}\Big)-\frac{d}{x}-\frac{d(d+3)}{(x-d-2)^2}
&\le d\Big(\frac{1}{x-d-2}-\frac{1}{x}-\frac{d+3}{(x-d-2)^2}\Big)
\\
&=\frac{-d\bigl(x+(d+2)^2\bigr)}{x(x-d-2)^2}
<0.
\qedhere
\end{align*}
\end{proof}

\subsection{Gamma Distributions under Forward KL}
\label{sec:forward_gamma}

Fix $\alpha>0$ and let $X_i\sim \mathrm{Gamma}(\alpha,\theta_\ast)$ with unknown scale $\theta_\ast>0$ and known shape $\alpha$.
The MLE is
\begin{equation}
\label{eq:gamma-MLE}
\widehat\theta_n=\frac{1}{n\alpha}\sum_{i=1}^n X_i.
\end{equation}
Indeed, the log-likelihood as a function of $\theta$ is
\[
\ell(\theta)=-n\alpha\log\theta-\theta^{-1}\sum_i X_i+C
\]
where $C$ does not depend on $\theta$; setting $\ell'(\theta)=0$ yields
\eqref{eq:gamma-MLE}.

\begin{proof}[Proof of Theorem~\ref{thm:gamma_forward_KL}]
For fixed shape $\alpha$, one computes
\[
D_{\mathrm{KL}}(\mathrm{Gamma}(\alpha,\theta_\ast)\,\|\,\mathrm{Gamma}(\alpha,\theta))
=
\alpha\Big(\log\frac{\theta}{\theta_\ast}+\frac{\theta_\ast}{\theta}-1\Big).
\]
Indeed, the log-density ratio for $X\sim\mathrm{Gamma}(\alpha,\theta_\ast)$ is
\[
\log\frac{f_{\theta_\ast}(X)}{f_{\theta}(X)}
=
\Big(-\alpha\log\theta_\ast-\frac{X}{\theta_\ast}\Big)-\Big(-\alpha\log\theta-\frac{X}{\theta}\Big)
=
\alpha\log\frac{\theta}{\theta_\ast}+X\Big(\frac{1}{\theta}-\frac{1}{\theta_\ast}\Big).
\]
Taking expectation and using $\mathbb E_{\theta_\ast}[X]=\alpha\theta_\ast$ gives
\[
D_{\mathrm{KL}}(\mathrm{Gamma}(\alpha,\theta_\ast)\,\|\,\mathrm{Gamma}(\alpha,\theta))
=\alpha\log\frac{\theta}{\theta_\ast}+\alpha\theta_\ast\Big(\frac{1}{\theta}-\frac{1}{\theta_\ast}\Big)
=\alpha\Big(\log\frac{\theta}{\theta_\ast}+\frac{\theta_\ast}{\theta}-1\Big).
\]
Let $S=\sum_i X_i\sim \mathrm{Gamma}(n\alpha,\theta_\ast)$, so 
\[
\widehat\theta_n=\frac{S}{n\alpha} \sim 
\mathrm{Gamma}\Big(n\alpha,\frac{\theta_\ast}{n\alpha}\Big).
\]
Then Lemma~\ref{lem:ElogGamma} gives $\mathbb E[\log(\widehat\theta_n/\theta_\ast)]=\psi(n\alpha)-\log(n\alpha)$.
Lemma~\ref{lem:gamma_inverse} gives (for $n\alpha>1$)
\[
\mathbb E\Big[\frac{\theta_\ast}{\widehat\theta_n}\Big]
=\theta_\ast\,\mathbb E[\widehat\theta_n^{-1}]
= \frac{\theta_\ast n\alpha}{\theta_\ast(n\alpha-1)}=\frac{n\alpha}{n\alpha-1}.
\]
Monotonicity follows by differentiating in $t=n\alpha$ and using \eqref{eq:trigamma_upper_simple}.
Concretely, define
\[
h(t):=\alpha\Big(\psi(t)-\log t+\frac{1}{t-1}\Big),\quad\forall t>1.
\]
Then $h(n\alpha)$ is the displayed risk, and
\[
h'(t)=\alpha\Big(\psi_1(t)-\frac{1}{t}-\frac{1}{(t-1)^2}\Big).
\]
By \eqref{eq:trigamma_upper_simple}, $\psi_1(t)<\frac{t+1}{t^2}
<\frac{1}{t}+\frac{1}{(t-1)^2}$ for $t>1$; hence $h'(t)<0$, implying the result.
\end{proof}

\section{Complete monotonicity of Forward KL Divergence}
\label{sec:laplace_monotonicity}

Here we finish the proofs of Theorems~\ref{thm:main}, \ref{thm:full_main}, and \ref{thm:gamma_forward_KL} in a different way.
Namely we represent the relevant functions as Laplace transforms of positive measures, i.e. in the form
\[
L(x)= \int_0^{\infty} e^{-xt} d\mu(t).
\]
This representation implies the much stronger property of \emph{complete monotonicity}: such a function $L$ satisfies $(-1)^k L^{(k)}(x)\geq 0$ where $L^{(k)}$ is the $k$-th derivative.

We will use the standard Laplace transform identities:
\begin{align}
\label{eq:laplace-1}
\frac{1}{x}&=\int_0^\infty e^{-xt}\,dt;
\\
\label{eq:laplace-2}
\frac{1}{(x-w)^2}=\int_0^\infty t\,e^{-(x-w)t}\,dt
&=\int_0^\infty t\,e^{-xt}e^{wt}\,dt, \quad\forall x>w.
\end{align}
By expanding the series \eqref{eq:trigamma_series} for $\psi_1$ we also obtain the Laplace transform representation of the trigamma function.

\begin{lemma}
\label{lem:trigamma_laplace}
For every $u>0$,
\begin{equation}
\label{eq:laplace-3}
\psi_1(u)=\int_0^\infty \frac{t\,e^{-ut}}{1-e^{-t}}\,dt.
\end{equation}
\end{lemma}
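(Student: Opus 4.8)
The plan is to start from the series representation \eqref{eq:trigamma_series}, namely $\psi_1(u)=\sum_{k=0}^\infty (u+k)^{-2}$, and apply the Laplace identity \eqref{eq:laplace-2} with $w=0$ to each term, giving $\frac{1}{(u+k)^2}=\int_0^\infty t\,e^{-(u+k)t}\,dt=\int_0^\infty t\,e^{-ut}e^{-kt}\,dt$. Summing over $k\ge 0$ and interchanging the sum and the integral then yields
\[
\psi_1(u)=\int_0^\infty t\,e^{-ut}\sum_{k=0}^\infty e^{-kt}\,dt
=\int_0^\infty \frac{t\,e^{-ut}}{1-e^{-t}}\,dt,
\]
using the geometric series $\sum_{k\ge 0}e^{-kt}=(1-e^{-t})^{-1}$ valid for $t>0$, which is exactly \eqref{eq:laplace-3}.

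The only nontrivial point is justifying the interchange of $\sum_k$ and $\int_0^\infty$. Since every term $t\,e^{-ut}e^{-kt}$ is nonnegative on $(0,\infty)$, this is immediate from the Tonelli (monotone convergence) theorem: the partial sums increase pointwise to the stated integrand, and the limit of the integrals equals the integral of the limit regardless of finiteness. Finiteness of the resulting integral is then automatic, since it equals $\psi_1(u)<\infty$ for $u>0$; alternatively one checks directly that near $t=0$ the integrand behaves like $t\cdot\frac{1}{t}=1$ (bounded) and decays exponentially as $t\to\infty$.

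**The main — indeed only — obstacle** is this convergence justification, and it is essentially routine because of nonnegativity; no delicate estimate is needed. Thus the whole proof amounts to: (i) invoke \eqref{eq:trigamma_series}; (ii) apply \eqref{eq:laplace-2} termwise; (iii) apply Tonelli to swap sum and integral; (iv) sum the geometric series. This also makes transparent why $\psi_1$ is completely monotone on $(0,\infty)$, which is what will be needed in the sequel when expressing $f_d$, $g_d$, $h_\alpha$ as Laplace transforms (combining \eqref{eq:laplace-3} with \eqref{eq:laplace-1} and \eqref{eq:laplace-2}, and verifying that the combined kernels are nonnegative on the relevant half-lines).
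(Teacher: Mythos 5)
Your proof is correct and follows essentially the same route as the paper: expand $\psi_1$ via \eqref{eq:trigamma_series}, represent each term as $\int_0^\infty t\,e^{-(u+k)t}\,dt$, swap sum and integral by Tonelli (nonnegativity), and sum the geometric series. No differences worth noting.
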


\begin{proof}
Starting from \eqref{eq:trigamma_series} and the identity
$\frac{1}{a^2}=\int_0^\infty t e^{-at}\,dt$ for $a>0$, we obtain
\[
\psi_1(u)=\sum_{k=0}^\infty \int_0^\infty t e^{-(u+k)t}\,dt
=\int_0^\infty t e^{-ut}\sum_{k=0}^\infty e^{-kt}\,dt
=\int_0^\infty \frac{t\,e^{-ut}}{1-e^{-t}}\,dt,
\]
where Tonelli's theorem justifies exchanging sum and integral.
\end{proof}

We now prove Theorem~\ref{thm:complete-monotone}, proceeding separately for each of the three cases.
In fact we will show the functions $-f_d',-g_d',-h_{\alpha}'$ are Laplace transforms of positive kernels as in \eqref{eq:laplace-formula}.
This implies they are completely monotone, hence so are $f_d,g_d,h_{\alpha}$.
(By definition of complete monotonicity it remains to verify their non-negativity; this follows from non-negativity of KL divergence.)
This yields Theorem~\ref{thm:complete-monotone} by again applying the equivalence between \eqref{eq:laplace-formula} and completely monotone functions in the opposite direction (see e.g. \cite[Chapter 14]{lax2014functional}).

\begin{proof}[Proof of Theorem~\ref{thm:complete-monotone} for $f_d$]
Fix $x>d+1$. Using \eqref{eq:laplace-1}, \eqref{eq:laplace-2}, \eqref{eq:laplace-3} with $w=d+1$
and the change of variables $s\mapsto 2t$, we compute
\begin{align*}
\frac12\psi_1\Big(\frac{x-j+1}{2}\Big)
&=\frac12\int_0^\infty \frac{s\,e^{-(x-j+1)s/2}}{1-e^{-s}}\,ds
=\int_0^\infty \frac{2t\,e^{-(x-j+1)t}}{1-e^{-2t}}\,dt.
\end{align*}
Summing over $j=1,\dots,d$ and using $\sum_{j=1}^d e^{(j-1)t}=\frac{e^{dt}-1}{e^{t}-1}$ gives
\[
\frac12\sum_{j=1}^d \psi_1\Big(\frac{x-j+1}{2}\Big)
=\int_0^\infty e^{-xt}\,
\frac{2t}{1-e^{-2t}}\cdot\frac{e^{dt}-1}{e^t-1}\,dt.
\]
Substituting these representations into $-f_d'(x)=\frac{d}{x}+\frac{d(d+1)}{(x-d-1)^2}-\frac12\sum_j\psi_1(\tfrac{x-j+1}{2})$ yields
\begin{equation}
\label{eq:laplace_kernel_fd}
-f_d'(x)=\int_0^\infty e^{-xt}\,B_d(t)\,dt,
\end{equation}
where the kernel is
\begin{equation}
\label{eq:Bd_def}
B_d(t)
:= d+d(d+1)t\,e^{(d+1)t}
-\frac{2t}{1-e^{-2t}}\cdot\frac{e^{dt}-1}{e^t-1},
\qquad t>0.
\end{equation}
The integral in \eqref{eq:laplace_kernel_fd} is absolutely convergent for $x>d+1$:
as $t\downarrow 0$ one has $B_d(t)=O(t)$ (hence no singularity at $0$), while as $t\to\infty$,
$B_d(t)=O(t e^{(d+1)t})$ and $e^{-xt}$ provides exponential decay since $x>d+1$.

It remains to show $B_d(t)>0$ for all $t>0$. First note
\[
\frac{2t}{1-e^{-2t}}=\frac{t e^t}{\sinh t}\le e^t
\qquad (t>0),
\]
since $\sinh t\ge t$ for $t\ge 0$. Hence
\[
B_d(t)\;\ge\; d+d(d+1)t\,e^{(d+1)t}
-e^t\cdot\frac{e^{dt}-1}{e^t-1}.
\]
Using $e^t\cdot \frac{e^{dt}-1}{e^t-1}=\sum_{k=1}^{d}e^{kt}$, we obtain the simpler lower bound
\[
B_d(t)\;\ge\;F_d(t)
:= d+d(d+1)t\,e^{(d+1)t}-\sum_{k=1}^d e^{kt}.
\]
Now $F_d(0)=0$, and differentiating gives
\[
F_d'(t)=d(d+1)e^{(d+1)t}\bigl(1+(d+1)t\bigr)-\sum_{k=1}^d k e^{kt}.
\]
Since $e^{kt}\le e^{dt}$ for $1\le k\le d$, we have
\[
\sum_{k=1}^d k e^{kt}\le e^{dt}\sum_{k=1}^d k=\frac{d(d+1)}{2}\,e^{dt}.
\]
Therefore,
\[
F_d'(t)\ge d(d+1)e^{dt}\Big(e^t(1+(d+1)t)-\frac12\Big).
\]
The right-hand side is strictly positive for $t>0$
because $e^t(1+(d+1)t)>1$.
Thus $F_d(t)>0$ for all $t>0$, and hence $B_d(t)\ge F_d(t)>0$ for all $t>0$.
\end{proof}


\begin{proof}[Proof of Theorem~\ref{thm:complete-monotone} for $g_d$]
Recall that for $x>d+2$ we have
\[
g_d'(x)
=\frac12\sum_{j=1}^d \psi_1\Big(\frac{x-j}{2}\Big)
-\frac{d}{x}
-\frac{d(d+3)}{(x-d-2)^2}.
\]
Fix $x>d+2$. Using \eqref{eq:laplace-1}, \eqref{eq:laplace-2}, \eqref{eq:laplace-3} with $w=d+2$ and the change of variables $s\mapsto 2t$, we compute
\[
\frac12\psi_1\Big(\frac{x-j}{2}\Big)
=\frac12\int_0^\infty \frac{s\,e^{-(x-j)s/2}}{1-e^{-s}}\,ds
=\int_0^\infty \frac{2t\,e^{-(x-j)t}}{1-e^{-2t}}\,dt.
\]
Summing over $j=1,\dots,d$ yields
\[
\frac12\sum_{j=1}^d \psi_1\Big(\frac{x-j}{2}\Big)
=\int_0^\infty e^{-xt}\,
\frac{2t}{1-e^{-2t}}\cdot\sum_{j=1}^d e^{jt}\,dt
=\int_0^\infty e^{-xt}\,
\frac{2t}{1-e^{-2t}}\cdot\frac{e^{(d+1)t}-e^{t}}{e^t-1}\,dt.
\]
Substituting into
\[
-g_d'(x)=\frac{d}{x}+\frac{d(d+3)}{(x-d-2)^2}-\frac12\sum_{j=1}^d \psi_1\Big(\frac{x-j}{2}\Big)
\]
gives the Laplace transform representation
\begin{equation}
\label{eq:laplace_kernel_gd}
-g_d'(x)=\int_0^\infty e^{-xt}\,\widetilde B_d(t)\,dt,
\end{equation}
where the kernel is
\[
\widetilde B_d(t)
:= d+d(d+3)t\,e^{(d+2)t}
-\frac{2t}{1-e^{-2t}}\cdot\sum_{j=1}^d e^{jt}
\;=\;
d+d(d+3)t\,e^{(d+2)t}
-\frac{2t}{1-e^{-2t}}\cdot\frac{e^{(d+1)t}-e^{t}}{e^t-1}.
\]
The integrand in \eqref{eq:laplace_kernel_gd} is again absolutely convergent for $x>d+2$.

We now show $\widetilde B_d(t)>0$ for all $t>0$. Using again that $\frac{2t}{1-e^{-2t}}\leq e^t$ for $t>0$ we find
\[
\widetilde B_d(t)\geq \widetilde F_d(t) := 
d+d(d+3)t\,e^{(d+2)t}-e^t\sum_{j=1}^d e^{jt}
=d+d(d+3)t\,e^{(d+2)t}-\sum_{k=2}^{d+1} e^{kt}.
\]
Then $\widetilde F_d(0)=0$, and differentiating gives
\[
\widetilde F_d'(t)
=d(d+3)e^{(d+2)t}\bigl(1+(d+2)t\bigr)-\sum_{k=2}^{d+1} k e^{kt}.
\]
Since $e^{kt}\le e^{(d+1)t}$ for $2\le k\le d+1$ and
\[
\sum_{k=2}^{d+1} k = \frac{(d+1)(d+2)}{2}-1=\frac{d(d+3)}{2},
\]
we have
\[
\sum_{k=2}^{d+1} k e^{kt}\le \frac{d(d+3)}{2}\,e^{(d+1)t}.
\]
Therefore,
\[
\widetilde F_d'(t)
\ge d(d+3)e^{(d+1)t}\Big(e^t(1+(d+2)t)-\frac12\Big)>0
\qquad (t>0),
\]
because $e^t(1+(d+2)t)\ge 1$ for all $t\ge 0$ and is strictly $>1$ for $t>0$.
Thus $\widetilde F_d(t)>0$ for all $t>0$, and hence $\widetilde B_d(t)\ge \widetilde F_d(t)>0$ for all $t>0$.
\end{proof}

\begin{proof}[Proof of Theorem~\ref{thm:complete-monotone} for $h_{\alpha}$]
For $t:=n\alpha$, the expected forward KL risk equals
\[
\mathcal R_{n,\alpha}(\theta_\ast)
:=\mathbb E\Big[D_{\mathrm{KL}}(\mathrm{Gamma}(\alpha,\theta_\ast)\,\|\,\mathrm{Gamma}(\alpha,\widehat\theta_n))\Big]
= \alpha\,h(t);
\qquad
h(t):=\psi(t)-\log t+\frac{1}{t-1},
\]
whenever $t>1$ (and $\mathcal R_{n,\alpha}=+\infty$ for $t\le 1$).

Differentiating,
\[
h'(t)=\psi_1(t)-\frac{1}{t}-\frac{1}{(t-1)^2}.
\]
Using \eqref{eq:laplace-1}, \eqref{eq:laplace-2}, \eqref{eq:laplace-3} with $w=1$
we obtain
\begin{equation}
\label{eq:laplace_gprime}
-h'(t)=\int_0^\infty e^{-ts}\,\breve B(s)\,ds;
\qquad
\breve B(s):=1+s e^{s}-\frac{s}{1-e^{-s}}.
\end{equation}
To see that $\breve B(s)>0$ for all $s>0$, set $u=e^s>1$ and set
\[
\breve F(u):= (u-1)\breve B(s)=(u-1)+u(u-2)\log u.
\]
Then one has
\[
\breve F'(u)=(u-1)(2\log u+1)>0,\quad\forall u>1.
\]
Meanwhile $\breve F(1)=0$, and so $\breve F(u)>0$ for all $u>1$.
This implies $\breve B(s)>0$ for all $s>0$ as desired.
\end{proof}

\section{Reverse KL monotonicity in exponential families}
\label{sec:reverse_exp_fam}

In this section we prove Proposition~\ref{prop:reverse_exp_family_monotone} on reverse KL monotonicity in general exponential families, and then present the explicit Gaussian formulas in Proposition~\ref{prop:reverse_known_mean} and \ref{prop:reverse_unknown_mean}.

To generalize the monotonicity of reverse KL divergence, we take advantage of the following domination in the convex order.
This idea goes back to \cite{marshall1965inequality,doob1971martingale}; for other recent applications see
\cite{mattei2023ensembles} and \cite[Theorem 4 and Proposition 5]{manole2023martingale}.

\begin{lemma}
\label{lem:convex_mean_general}
Let $Z_1,Z_2,\dots$ be IID in a finite-dimensional real vector space $V$, and let
$\bar Z_n:=\frac1n\sum_{i=1}^n Z_i$.
Let $\phi:V\to(-\infty,+\infty]$ be convex and assume $\mathbb E[\phi(\bar Z_{n_0})]<\infty$.
Then
\[
\mathbb E[\phi(\bar Z_{n+1})]\le \mathbb E[\phi(\bar Z_n)],\quad\forall n\geq n_0.
\]
If $\phi$ is strictly convex on a convex set supporting $\bar Z_n$ and $Z_1$ is non-degenerate, then the inequality is strict.
\end{lemma}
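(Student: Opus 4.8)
The plan is to exhibit, for each $n$, a coupling of $\bar Z_n$ and $\bar Z_{n+1}$ such that $\bar Z_n$ is a conditional expectation of $\bar Z_{n+1}$ (or vice versa, in the direction that makes Jensen work), and then invoke conditional Jensen plus the tower property. Concretely, I would observe that $\bar Z_{n+1} = \frac{n}{n+1}\bar Z_n + \frac{1}{n+1}Z_{n+1}$, but the cleaner symmetry to exploit is the following: condition on the sum $S_{n+1} = Z_1+\dots+Z_{n+1}$ (equivalently on $\bar Z_{n+1}$). By exchangeability of $Z_1,\dots,Z_{n+1}$, for each $i$ the "leave-one-out" average $\bar Z_n^{(i)} := \frac{1}{n}(S_{n+1}-Z_i)$ has the same conditional law given $S_{n+1}$, and averaging over $i\in\{1,\dots,n+1\}$ gives $\frac{1}{n+1}\sum_{i=1}^{n+1}\bar Z_n^{(i)} = \frac{1}{n+1}\sum_i \frac{S_{n+1}-Z_i}{n} = \frac{(n+1)S_{n+1}-S_{n+1}}{n(n+1)} = \frac{S_{n+1}}{n+1} = \bar Z_{n+1}$. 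Hence $\mathbb E[\bar Z_n \mid S_{n+1}] = \bar Z_{n+1}$, where on the left $\bar Z_n = \bar Z_n^{(n+1)}$ is the average of the first $n$ variables (its conditional law given $S_{n+1}$ matches that of any $\bar Z_n^{(i)}$).

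Given this, conditional Jensen yields $\phi(\bar Z_{n+1}) = \phi\big(\mathbb E[\bar Z_n\mid S_{n+1}]\big) \le \mathbb E[\phi(\bar Z_n)\mid S_{n+1}]$ almost surely, and taking expectations gives $\mathbb E[\phi(\bar Z_{n+1})] \le \mathbb E[\phi(\bar Z_n)]$. The finiteness hypothesis $\mathbb E[\phi(\bar Z_{n_0})]<\infty$ propagates upward along this chain (each $\mathbb E[\phi(\bar Z_{n+1})]$ is bounded by $\mathbb E[\phi(\bar Z_n)]$), so all quantities involved are well-defined in $(-\infty,+\infty]$ and the inequality is meaningful for all $n\ge n_0$; I would also note $\phi$ is bounded below by an affine function on the affine hull of the support of the $Z_i$, which keeps the conditional expectations from being $-\infty$. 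For the strictness claim: if $\phi$ is strictly convex on a convex set $K$ carrying the law of $\bar Z_n$ and $Z_1$ is non-degenerate, then the conditional law of $\bar Z_n$ given $S_{n+1}$ is a.s.\ non-degenerate (since $Z_{n+1}$ is not a.s.\ determined by $S_{n+1}$ when $Z_1$ is non-degenerate), so conditional Jensen is a.s.\ strict, giving strict inequality after integration.

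The main obstacle is purely a matter of care rather than depth: handling the $+\infty$ values and the direction of conditioning cleanly. One must make sure $\mathbb E[\phi(\bar Z_n)\mid S_{n+1}]$ is a legitimate conditional expectation (use the affine lower bound on $\phi$ so that $\phi(\bar Z_n)$ has a well-defined, possibly $+\infty$, conditional expectation), and confirm that the leave-one-out identity $\mathbb E[\bar Z_n\mid S_{n+1}]=\bar Z_{n+1}$ is exactly the barycenter statement needed for Jensen in the correct direction. For the strictness part, the only slightly delicate point is arguing that the conditional law of $\bar Z_n$ given $S_{n+1}$ is non-degenerate on an event of full probability; this follows because if it were degenerate with positive probability, then $Z_{n+1}=S_{n+1}-n\bar Z_n$ would be a measurable function of $S_{n+1}$ on that event, contradicting independence and non-degeneracy of $Z_{n+1}$. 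Everything else is the tower property and monotonicity of integration.
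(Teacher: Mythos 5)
Your proposal is correct and uses essentially the same argument as the paper: the leave-one-out decomposition $\frac{1}{n+1}\sum_{i=1}^{n+1}\bar Z_n^{(i)}=\bar Z_{n+1}$ combined with (conditional) Jensen. The only cosmetic difference is that you condition on $S_{n+1}$ and invoke exchangeability to get $\mathbb E[\bar Z_n\mid S_{n+1}]=\bar Z_{n+1}$, whereas the paper conditions on all of $Z_1,\dots,Z_{n+1}$ and averages over an independent uniform random index $I$; your treatment of the $+\infty$ values and of strictness is, if anything, more explicit than the paper's.
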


\begin{proof}
Let 
\[
\bar Z_n^{(-i)}:=\frac1n\sum_{\substack{1\le j\le n+1 \\ j\neq i}} Z_j
\]
be the leave-one-out mean.
Let $I$ be uniform on $\{1,\dots,n+1\}$, independent of all $Z_j$.
Then $\mathbb E[\bar Z_n^{(-I)}\mid Z_1,\dots,Z_{n+1}]=\bar Z_{n+1}$.
Jensen gives
\[
\phi(\bar Z_{n+1})
\le \mathbb E[\phi(\bar Z_n^{(-I)})\mid Z_1,\dots,Z_{n+1}].
\]
Taking outer expectations yields the claim since $\mathbb E[\phi(\bar Z_n^{(-I)})|I=j]=\mathbb E[\phi(\bar Z_n)]$ for each $j$.
\end{proof}

\subsection{Regular exponential families and Bregman form of reverse KL}

Let $\nu$ be a measure on $\mathcal X$ and consider a (minimal, regular) exponential family on $\mathcal X$ given by the densities
\[
p_\theta(x)=\exp\big(\langle \theta, T(x)\rangle - A(\theta)\big)\,h(x),
\qquad \theta\in\Theta\subset\mathbb R^m,
\]
where 
\[
A(\theta)=\log\int \exp(\langle\theta,T(x)\rangle)h(x)\,d\nu(x).
\]
Define the mean map $\mu(\theta):=\mathbb E_\theta[T(X)]=\nabla A(\theta)$.
Let $\mathcal M:=\nabla A(\Theta)$ be the mean-parameter space, and let
\[
A^\ast(\mu):=\sup_{\theta\in\Theta}\{\langle \theta,\mu\rangle - A(\theta)\}
\]
be the Fenchel dual on $\mathcal M$, which is well known to be convex.

\begin{proposition}
\label{prop:reverseKL_bregman}
Let $\theta,\theta_\ast\in\Theta$, and set $\mu=\nabla A(\theta)$, $\mu_\ast=\nabla A(\theta_\ast)$.
Then
\[
D_{\mathrm{KL}}(p_\theta\,\|\,p_{\theta_\ast})
=
A^\ast(\mu)-A^\ast(\mu_\ast)-\langle \nabla A^\ast(\mu_\ast),\,\mu-\mu_\ast\rangle
=: B_{A^\ast}(\mu,\mu_\ast),
\]
the Bregman divergence generated by the convex function $A^\ast$.
In particular, $\mu\mapsto D_{\mathrm{KL}}(p_{\theta(\mu)}\|p_{\theta_\ast})$ is convex on $\mathcal M$.
\end{proposition}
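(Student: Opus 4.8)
The plan is to first write the reverse KL divergence explicitly in the natural parameters, then translate to the mean parameters via the standard Fenchel duality for the log-partition function, and finally match the resulting expression term-by-term with the stated Bregman divergence.

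First I would compute the log-density ratio pointwise: for $X\sim p_\theta$,
\[
\log\frac{p_\theta(X)}{p_{\theta_*}(X)}
=\langle \theta-\theta_*,\,T(X)\rangle - A(\theta) + A(\theta_*).
\]
Taking expectations under $p_\theta$ and using $\mathbb E_\theta[T(X)]=\nabla A(\theta)=\mu$ (all moments are finite since $\theta$ lies in the interior of $\Theta$, so the divergence is finite) yields the natural-parameter formula
\[
D_{\mathrm{KL}}(p_\theta\,\|\,p_{\theta_*})
=\langle \theta-\theta_*,\,\mu\rangle - A(\theta) + A(\theta_*).
\]

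Next I would invoke the two standard conjugate-duality identities for a minimal regular exponential family. For $\mu=\nabla A(\theta)$ with $\theta$ in the interior of $\Theta$, the map $\vartheta\mapsto \langle\vartheta,\mu\rangle - A(\vartheta)$ is concave with gradient $\mu-\nabla A(\vartheta)$, which vanishes — uniquely, by strict convexity of $A$ in the minimal case — at $\vartheta=\theta$; hence the supremum defining $A^*(\mu)$ is attained there, giving
\[
A^*(\mu)=\langle\theta,\mu\rangle - A(\theta),
\qquad
\nabla A^*(\mu)=\theta,
\]
the second identity following from the envelope theorem, or equivalently by differentiating the first in $\theta$ and using invertibility of $\nabla^2 A(\theta)$. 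Applying the first identity at both $\mu$ and $\mu_*$, and the second at $\mu_*$ (so $\nabla A^*(\mu_*)=\theta_*$), I would substitute into
\[
B_{A^*}(\mu,\mu_*)=A^*(\mu)-A^*(\mu_*)-\langle\nabla A^*(\mu_*),\,\mu-\mu_*\rangle
\]
and expand. The terms $-\langle\theta_*,\mu_*\rangle+A(\theta_*)$, $-\langle\theta_*,\mu\rangle$, and $+\langle\theta_*,\mu_*\rangle$ collect so that everything involving $\mu_*$ cancels, leaving exactly $\langle\theta-\theta_*,\mu\rangle - A(\theta) + A(\theta_*)$, which is the natural-parameter formula above. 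This establishes the Bregman identity.

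Finally, convexity in $\mu$ is immediate: fixing $\mu_*$, the map $\mu\mapsto B_{A^*}(\mu,\mu_*)=A^*(\mu)-\langle\nabla A^*(\mu_*),\mu\rangle + \text{const}$ differs from $A^*$ by an affine function, and $A^*$, being a Fenchel conjugate, is convex on $\mathcal M$. I expect the only real obstacle to be invoking the conjugate-duality identities cleanly — in particular, that the supremum in the definition of $A^*$ is attained at $\theta$ and that $\nabla A^*(\mu)=\theta$ — which genuinely use the minimality hypothesis (strict convexity of $A$, i.e.\ invertible Hessian) and the regularity hypothesis (openness of $\mathcal M$ and bijectivity of $\nabla A$); granting these, the rest is bookkeeping.
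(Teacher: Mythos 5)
Your proposal is correct and follows essentially the same route as the paper's proof: expand the log-density ratio, take expectations under $p_\theta$ to get $\langle\theta-\theta_\ast,\mu\rangle-A(\theta)+A(\theta_\ast)$, then substitute the Legendre-duality identities $A^\ast(\mu)=\langle\theta,\mu\rangle-A(\theta)$ and $\nabla A^\ast(\mu_\ast)=\theta_\ast$ and observe that the Bregman form differs from $A^\ast$ by an affine function of $\mu$. Your additional care in justifying attainment of the supremum and the gradient identity via minimality and regularity is a welcome elaboration of details the paper leaves implicit.
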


\begin{proof}
We compute directly by expanding the log-density ratio.
Since both $p_\theta$ and $p_{\theta_\ast}$ share the same base
measure $h(x)\,d\nu(x)$, the $h$ terms cancel in $\log(p_\theta/p_{\theta_\ast})$, leaving only the sufficient-statistic
and log-partition contributions:
\[
D_{\mathrm{KL}}(p_\theta\|p_{\theta_\ast})
=\mathbb E_\theta[\langle \theta-\theta_\ast,T(X)\rangle - A(\theta)+A(\theta_\ast)]
=\langle \theta-\theta_\ast,\mu\rangle - A(\theta)+A(\theta_\ast).
\]
By Legendre duality, for $\mu=\nabla A(\theta)$ one has
$A^\ast(\mu)=\langle \theta,\mu\rangle - A(\theta)$, and similarly
$A^\ast(\mu_\ast)=\langle \theta_\ast,\mu_\ast\rangle - A(\theta_\ast)$.
Moreover $\nabla A^\ast(\mu_\ast)=\theta_\ast$. Substituting these identities into the previous display and
rearranging yields the stated Bregman form, which is clearly convex.
\end{proof}

Let $X_1,\dots,X_n\sim p_{\theta_\ast}$ be IID and write $\bar T_n=\frac1n\sum_{i=1}^n T(X_i)$.
Whenever the MLE exists and lies in the interior, it satisfies
\[
\nabla A(\widehat\theta_n)=\bar T_n,
\qquad \iff \qquad \widehat\mu_n:=\nabla A(\widehat\theta_n)=\bar T_n.
\]

 We now prove Proposition~\ref{prop:reverse_exp_family_monotone}.
 
 \begin{proof}[Proof of Proposition~\ref{prop:reverse_exp_family_monotone}]
By Proposition~\ref{prop:reverseKL_bregman},
\[
D_{\mathrm{KL}}(p_{\widehat\theta_n}\|p_{\theta_\ast})
= B_{A^\ast}(\widehat\mu_n,\mu_\ast)
= B_{A^\ast}(\bar T_n,\mu_\ast).
\]
Define $\phi(\mu):=B_{A^\ast}(\mu,\mu_\ast)$.
As a function of $\mu$, this is convex because it is the sum of the convex function $A^\ast(\mu)$ and an affine
function of $\mu$ (the remaining Bregman terms depend on $\mu$ only linearly).

Now $\bar T_n=\frac1n\sum_{i=1}^n T(X_i)$ is the sample mean of the IID vectors $T(X_i)$.
Apply Lemma~\ref{lem:convex_mean_general} with $Z_i:=T(X_i)$ and $\bar Z_n:=\bar T_n$ to conclude
$\mathbb E[\phi(\bar T_{n+1})]\le \mathbb E[\phi(\bar T_n)]$, i.e.\ the expected reverse KL is nonincreasing.
Strictness follows from the strict convexity/non-degeneracy conditions exactly as in Lemma~\ref{lem:convex_mean_general}.
\end{proof}

\subsection{Explicit Reverse KL for Gaussians with Known Mean}

Assume $X_1,\dots,X_n\sim\mathcal N(0,\Sigma_\ast)$ and the MLE is
\[
\widehat\Sigma_n := \frac1n\sum_{i=1}^n X_iX_i^\top.
\]
We derive explicit formulas for the reverse-KL learning curve
\[
\widetilde{\mathcal E}^{(0)}_{n,d}(\Sigma_\ast)
:= \mathbb E\Big[D_{\mathrm{KL}}\big(\mathcal N(0,\widehat\Sigma_n)\,\|\,\mathcal N(0,\Sigma_\ast)\big)\Big]
\]
and again deduce monotonicity.

\begin{proof}[Proof of Proposition~\ref{prop:reverse_known_mean}]
As before we reduce to the identity-covariance case by coordinate change. Set $Y_i:=\Sigma_\ast^{-1/2}X_i$, so
$Y_i\sim\mathcal N(0,I_d)$, and define the empirical covariance in the new coordinates by
\[
\widehat S_n := \Sigma_\ast^{-1/2}\widehat\Sigma_n\Sigma_\ast^{-1/2} = \frac1n\sum_{i=1}^n Y_iY_i^\top.
\]
Since KL divergence is invariant under the invertible change of variables $x\mapsto \Sigma_\ast^{-1/2}x$,
\[
D_{\mathrm{KL}}\big(\mathcal N(0,\widehat\Sigma_n)\,\|\,\mathcal N(0,\Sigma_\ast)\big)
=
D_{\mathrm{KL}}\big(\mathcal N(0,\widehat S_n)\,\|\,\mathcal N(0,I_d)\big).
\]
The Gaussian KL formula gives, for $\Sigma\succ 0$, that
\[
D_{\mathrm{KL}}(\mathcal N(0,\Sigma)\|\mathcal N(0,I_d))=\frac12(\tr(\Sigma)-d-\log\det\Sigma)
\]
and hence
\[
D_{\mathrm{KL}}\big(\mathcal N(0,\widehat S_n)\,\|\,\mathcal N(0,I_d)\big)
=\frac12\Big(\tr(\widehat S_n)-d-\log\det(\widehat S_n)\Big).
\]
Let $W:=\sum_{i=1}^n Y_iY_i^\top$, so $W\sim\mathrm{Wishart}_d(n,I_d)$ and $\widehat S_n=W/n$.
Moreover $\tr(W)=\sum_{i=1}^n \|Y_i\|^2$, so $\mathbb E[\tr(W)]=nd$ and thus $\mathbb E[\tr(\widehat S_n)]=d$.
Therefore the $\tr(\widehat S_n)-d$ term vanishes in expectation, giving
\[
\widetilde{\mathcal E}^{(0)}_{n,d}(\Sigma_\ast)
= -\frac12\,\mathbb E[\log\det(\widehat S_n)]
= -\frac12\Big(\mathbb E[\log\det W]-d\log n\Big).
\]
Applying Proposition~\ref{prop:ElogdetW} yields
\[
\mathbb E[\log\det W]
= d\log 2 + \sum_{j=1}^d \psi\!\Big(\frac{n-j+1}{2}\Big),
\]
so
\begin{equation}
\label{eq:reverse_known_closed}
\widetilde{\mathcal E}^{(0)}_{n,d}(\Sigma_\ast)
=\frac12\,r_d(n),
\qquad
r_d(x):= d\log(x/2) - \sum_{j=1}^d \psi\!\Big(\frac{x-j+1}{2}\Big),\quad x>d-1.
\end{equation}
If $n<d$, then $W$ is singular a.s.,
so $\log\det(\widehat S_n)=-\infty$ and the expectation is $+\infty$.

For monotonicity, differentiate $r_d(x)$ for $x>d-1$:
\[
r_d'(x)=\frac{d}{x}-\frac12\sum_{j=1}^d \psi_1\!\Big(\frac{x-j+1}{2}\Big).
\]
By \eqref{eq:trigamma_lower}, for each $j$,
\[
\frac12\psi_1\!\Big(\frac{x-j+1}{2}\Big)>\frac{1}{x-j+1}.
\]
Summing over $j$ gives
\[
r_d'(x)<\frac{d}{x}-\sum_{j=1}^d\frac{1}{x-j+1}\leq \frac{d}{x}-\frac{d}{x}=0,
\]
since each denominator satisfies $x-j+1\leq x$ and hence $(x-j+1)^{-1}\geq x^{-1}$. Hence $r_d$ is strictly decreasing on $(d-1,\infty)$,
and in particular $r_d(n+1)<r_d(n)$ for all integers $n\ge d$.
\end{proof}

\subsection{Explicit Reverse KL for Gaussians with Unknown Mean}

Assume $X_1,\dots,X_n\sim\mathcal N(\mu_\ast,\Sigma_\ast)$ and use the Gaussian MLEs
\[
\widehat\mu_n:=\bar X,\qquad
\widehat\Sigma_n:=\frac1n\sum_{i=1}^n (X_i-\bar X)(X_i-\bar X)^\top.
\]
Define the reverse-KL learning curve
\[
\widetilde{\mathcal E}^{\mathrm{full}}_{n,d}(\mu_\ast,\Sigma_\ast)
:= \mathbb E\Big[D_{\mathrm{KL}}\big(\mathcal N(\widehat\mu_n,\widehat\Sigma_n)\,\|\,\mathcal N(\mu_\ast,\Sigma_\ast)\big)\Big].
\]

 \begin{proof}[Proof of Proposition~\ref{prop:reverse_unknown_mean}]
Again we use the coordinate change 
\[
Y_i:=\Sigma_\ast^{-1/2}(X_i-\mu_\ast)\sim\mathcal N(0,I_d),\qquad
\bar Y=\frac1n\sum_i Y_i.
\]
Then
\[
\widehat\mu_n-\mu_\ast=\Sigma_\ast^{1/2}\bar Y,
\qquad
\widehat\Sigma_n = \Sigma_\ast^{1/2}\widehat S_n\Sigma_\ast^{1/2},
\qquad
\widehat S_n:=\frac1n\sum_{i=1}^n (Y_i-\bar Y)(Y_i-\bar Y)^\top.
\]
The Gaussian KL formula gives
\begin{align*}
D_{\mathrm{KL}}\big(\mathcal N(\widehat\mu_n,\widehat\Sigma_n)\,\|\,\mathcal N(\mu_\ast,\Sigma_\ast)\big)
&=\frac12\Big(\tr(\Sigma_\ast^{-1}\widehat\Sigma_n)
+(\widehat\mu_n-\mu_\ast)^\top\Sigma_\ast^{-1}(\widehat\mu_n-\mu_\ast)
-d-\log\det(\Sigma_\ast^{-1}\widehat\Sigma_n)\Big)\\
&=\frac12\Big(\tr(\widehat S_n)+\|\bar Y\|^2-d-\log\det(\widehat S_n)\Big),
\end{align*}
which is again independent of $(\mu_*,\Sigma_*)$.

Let 
\[
W:=\sum_{i=1}^n (Y_i-\bar Y)(Y_i-\bar Y)^\top
\]
so $\widehat S_n=W/n$.
By Lemma~\ref{lem:centered_scatter_wishart} (applied to $Y_1,\dots,Y_n\sim\mathcal N(0,I_d)$), we have
\[
W\sim\mathrm{Wishart}_d(n-1,I_d)
\]
and that $W$ is independent of $\bar Y\sim\mathcal N(0,I_d/n)$. Consequently,
\begin{align*}
\mathbb E[\tr(\widehat S_n)] 
&= \frac1n\mathbb E[\tr(W)] = \frac{d(n-1)}{n}=d\Big(1-\frac1n\Big);
\\
\mathbb E[\|\bar Y\|^2]
&=\tr(\mathbb E[\bar Y\bar Y^\top])=\tr(I_d/n)
=\frac dn.
\end{align*}
Thus $\mathbb E[\tr(\widehat S_n)+\|\bar Y\|^2-d]=0$. In particular, after taking expectations the reverse KL reduces
again to a (negative) log-determinant term independent of $(\mu_*,\Sigma_*)$:
\[
\widetilde{\mathcal E}^{\mathrm{full}}_{n,d}(\mu_\ast,\Sigma_\ast)
= -\frac12\,\mathbb E[\log\det(\widehat S_n)]
= -\frac12\Big(\mathbb E[\log\det W]-d\log n\Big).
\]
Applying Proposition~\ref{prop:ElogdetW} with $n$ replaced by $n-1$ yields:
\[
\mathbb E[\log\det W]
= d\log 2 + \sum_{j=1}^d \psi\!\Big(\frac{(n-1)-j+1}{2}\Big)
= d\log 2 + \sum_{j=1}^d \psi\!\Big(\frac{n-j}{2}\Big),
\]
i.e. for $r_d$ in \eqref{eq:reverse_known_closed} we have
$\widetilde{\mathcal E}^{\mathrm{full}}_{n,d}
=r_d(n-1)/2$.
We have already seen that $r_d$ is strictly decreasing on $(d-1,\infty)$; thus $\widetilde{\mathcal E}^{\mathrm{full}}_{n,d}$ is decreasing in $n$ for $n>d$, concluding the proof.
\end{proof}

\small
\medskip
\noindent\textbf{Acknowledgement}
Thanks to Sébastien Bubeck and Mehtaab Sawhney for helpful comments.


\bibliographystyle{alphaabbr}
\bibliography{bib}

\newcommand{\etalchar}[1]{$^{#1}$}
\begin{thebibliography}{BHMM19}

\bibitem[AFS92]{amari1992four}
S.~Amari, N.~Fujita, and S.~Shinomoto.
\newblock Four types of learning curves.
\newblock {\em Neural Computation}, 4(4):605--618, 1992.

\bibitem[Ama93]{amari1993universal}
S.~Amari.
\newblock A universal theorem on learning curves.
\newblock {\em Neural networks}, 6(2):161--166, 1993.

\bibitem[And03]{Anderson2003}
T.~Anderson.
\newblock {An Introduction to Multivariate Statistical Analysis}.
\newblock {\em Wiley series in probability and statistics}, 2003.

\bibitem[BDK{\etalchar{+}}22]{BousquetEtAl2022}
O.~J. Bousquet, A.~Daniely, H.~Kaplan, Y.~Mansour, S.~Moran, and U.~Stemmer.
\newblock Monotone learning.
\newblock In {\em Conference on Learning Theory}, pages 842--866. PMLR, 2022.

\bibitem[BE53]{bateman1953higher}
H.~Bateman and A.~Erd{\'e}lyi.
\newblock {Higher transcendental functions, volume II}.
\newblock {\em Bateman Manuscript Project) McGraw-Hill Book Company}, 410, 1953.

\bibitem[Ber29]{bernstein1929fonctions}
S.~Bernstein.
\newblock Sur les fonctions absolument monotones.
\newblock {\em Acta Mathematica}, 52(1):1--66, 1929.

\bibitem[BHMM19]{belkin2019reconciling}
M.~Belkin, D.~Hsu, S.~Ma, and S.~Mandal.
\newblock Reconciling modern machine-learning practice and the classical bias--variance trade-off.
\newblock {\em Proc. Natl. Acad. Sci. U.S.A.}, 116(32):15849--15854, 2019.

\bibitem[BHX20]{belkin2020two}
M.~Belkin, D.~Hsu, and J.~Xu.
\newblock Two models of double descent for weak features.
\newblock {\em SIAM Journal on Mathematics of Data Science}, 2(4):1167--1180, 2020.

\bibitem[CT99]{cover1999elements}
T.~M. Cover and J.~A. Thomas.
\newblock {\em {Elements of Information Theory}}.
\newblock John Wiley \& Sons, 1999.

\bibitem[DGL97]{devroye1997probabilistic}
L.~Devroye, L.~Gy{\"o}rfi, and G.~Lugosi.
\newblock {\em A Probabilistic Theory of Pattern Recognition}, volume~31.
\newblock Springer Science \& Business Media, 1997.

\bibitem[Doo71]{doob1971martingale}
J.~L. Doob.
\newblock What is a martingale?
\newblock {\em The American Mathematical Monthly}, 78(5):451--463, 1971.

\bibitem[HBM{\etalchar{+}}22]{hoffmann2022training}
J.~Hoffmann, S.~Borgeaud, A.~Mensch, E.~Buchatskaya, T.~Cai, E.~Rutherford, D.~de~Las~Casas, L.~A. Hendricks, J.~Welbl, A.~Clark, et~al.
\newblock Training compute-optimal large language models.
\newblock In {\em 36th International Conference on Neural Information Processing Systems}, 2022.

\bibitem[HKOS91]{haussler1991estimating}
D.~Haussler, M.~Kearns, M.~Opper, and R.~Schapire.
\newblock {Estimating average-case learning curves using Bayesian, statistical physics and VC dimension methods}.
\newblock {\em Advances in Neural Information Processing Systems}, 4, 1991.

\bibitem[HMRT22]{hastie2022surprises}
T.~Hastie, A.~Montanari, S.~Rosset, and R.~J. Tibshirani.
\newblock Surprises in high-dimensional ridgeless least squares interpolation.
\newblock {\em Annals of Statistics}, 50(2):949, 2022.

\bibitem[KBJ19]{JohnsonKotzBalakrishnan1994}
S.~Kotz, N.~Balakrishnan, and N.~L. Johnson.
\newblock {\em Continuous multivariate distributions, Volume 1: Models and applications}, volume~1.
\newblock John Wiley \& Sons, 2019.

\bibitem[KMH{\etalchar{+}}20]{kaplan2020scaling}
J.~Kaplan, S.~McCandlish, T.~Henighan, T.~B. Brown, B.~Chess, R.~Child, S.~Gray, A.~Radford, J.~Wu, and D.~Amodei.
\newblock Scaling laws for neural language models.
\newblock {\em arXiv:2001.08361}, 2020.

\bibitem[Lax14]{lax2014functional}
P.~D. Lax.
\newblock {\em Functional analysis}.
\newblock John Wiley \& Sons, 2014.

\bibitem[LKB23]{loog2023also}
M.~Loog, J.~H. Krijthe, and M.~Bicego.
\newblock Also for $k$-means: more data does not imply better performance.
\newblock {\em Machine Learning}, 112(8):3033--3050, 2023.

\bibitem[LVM19]{loog2019minimizers}
M.~Loog, T.~Viering, and A.~Mey.
\newblock Minimizers of the empirical risk and risk monotonicity.
\newblock {\em Advances in Neural Information Processing Systems}, 32, 2019.

\bibitem[LW21]{li2021minimum}
Y.~Li and Y.~Wei.
\newblock Minimum $\ell_1$-norm interpolators: Precise asymptotics and multiple descent.
\newblock {\em arXiv:2110.09502}, 2021.

\bibitem[Mer14]{merkle2014completely}
M.~Merkle.
\newblock Completely monotone functions: a digest.
\newblock In {\em Analytic Number Theory, Approximation Theory, and Special Functions: In Honor of Hari M. Srivastava}, pages 347--364. 2014.

\bibitem[MG23]{mattei2023ensembles}
P.-A. Mattei and D.~Garreau.
\newblock Are ensembles getting better all the time?
\newblock {\em arXiv preprint arXiv:2311.17885}, 2023.

\bibitem[MM22]{mei2022generalization}
S.~Mei and A.~Montanari.
\newblock The generalization error of random features regression: Precise asymptotics and the double descent curve.
\newblock {\em Comm. Pure Appl. Math.}, 75(4):667--766, 2022.

\bibitem[MP65]{marshall1965inequality}
A.~W. Marshall and F.~Proschan.
\newblock An inequality for convex functions involving majorization.
\newblock {\em Journal of Mathematical Analysis and Applications}, 12(1):87--90, 1965.

\bibitem[MR23]{manole2023martingale}
T.~Manole and A.~Ramdas.
\newblock Martingale methods for sequential estimation of convex functionals and divergences.
\newblock {\em IEEE Transactions on Information Theory}, 69(7):4641--4658, 2023.

\bibitem[Mui09]{Muirhead1982}
R.~J. Muirhead.
\newblock {\em Aspects of multivariate statistical theory}.
\newblock John Wiley \& Sons, 2009.

\bibitem[NKB{\etalchar{+}}21]{nakkiran2021deep}
P.~Nakkiran, G.~Kaplun, Y.~Bansal, T.~Yang, B.~Barak, and I.~Sutskever.
\newblock Deep double descent: Where bigger models and more data hurt.
\newblock {\em Journal of Statistical Mechanics: Theory and Experiment}, 2021(12):124003, 2021.

\bibitem[OH91]{opper1991calculation}
M.~Opper and D.~Haussler.
\newblock Calculation of the learning curve of bayes optimal classification algorithm for learning a perceptron with noise.
\newblock In {\em COLT}, volume~91, pages 75--87, 1991.

\bibitem[PC87]{patarnello1987learning}
S.~Patarnello and P.~Carnevali.
\newblock Learning networks of neurons with boolean logic.
\newblock {\em Europhysics Letters}, 4(4):503, 1987.

\bibitem[Pes22]{pestov2022universally}
V.~Pestov.
\newblock A universally consistent learning rule with a universally monotone error.
\newblock {\em Journal of Machine Learning Research}, 23(157):1--27, 2022.

\bibitem[SST92]{seung1992statistical}
H.~S. Seung, H.~Sompolinsky, and N.~Tishby.
\newblock Statistical mechanics of learning from examples.
\newblock {\em Physical review A}, 45(8):6056, 1992.

\bibitem[TT93]{takahashi1993estimating}
H.~Takahashi and E.~Tomita.
\newblock {Estimating learning curves by PAC-learnability criterion}.
\newblock In {\em Proceedings of 1993 International Conference on Neural Networks (IJCNN-93-Nagoya, Japan)}, volume~2, pages 1641--1644. IEEE, 1993.

\bibitem[VL22]{viering2022shape}
T.~Viering and M.~Loog.
\newblock The shape of learning curves: a review.
\newblock {\em IEEE Transactions on Pattern Analysis and Machine Intelligence}, 45(6):7799--7819, 2022.

\bibitem[VML19]{VieringLoog2019}
T.~Viering, A.~Mey, and M.~Loog.
\newblock Open problem: Monotonicity of learning.
\newblock In {\em Conference on Learning Theory}, pages 3198--3201. PMLR, 2019.

\bibitem[YHB{\etalchar{+}}21]{yang2021tuning}
G.~Yang, E.~Hu, I.~Babuschkin, S.~Sidor, X.~Liu, D.~Farhi, N.~Ryder, J.~Pachocki, W.~Chen, and J.~Gao.
\newblock Tuning large neural networks via zero-shot hyperparameter transfer.
\newblock {\em Advances in Neural Information Processing Systems}, 34:17084--17097, 2021.

\end{thebibliography}

\end{document}